\colorlet{shadecolor}{blue!15}
\theoremstyle{plain}
\newtheorem{theorem}{Theorem}[section]
\newtheorem{lemma}[theorem]{Lemma}
\newtheorem{proposition}[theorem]{Proposition}
\newtheorem{definition}[theorem]{Definition}
\theoremstyle{remark}
\newtheorem{remark}{Remark}[section]
\newcommand{\be}[1]{\begin{equation}\label{#1}}
\newcommand{\ee}{\end{equation}}
\numberwithin{equation}{section}
\newcommand{\ba}[1]{\begin{align}\label{#1}}
\newcommand{\ea}{\end{align}}
\numberwithin{equation}{section}
\newcommand{\ben}{\begin{equation*}}
\newcommand{\een}{\end{equation*}}
\numberwithin{equation}{section}
\newcommand{\calP}{\mathcal{P}}
\newcommand{\calS}{\mathcal{S}}
\newcommand{\bbP}{\mathbb{P}}
\newcommand{\bbR}{\mathbb{R}}
\newcommand{\sfC}{{\sf C}}
\newcommand{\ep}{\varepsilon}
\newcommand{\n}{\mathbf n}
\newcommand{\rk}[1]{\bgroup\color{red}%
  \par\medskip\hrule\smallskip%
  \noindent\textbf{#1}%
  \par\smallskip\hrule\medskip\egroup}
\renewcommand{\P}{\bbP_\beta}
\newcommand{\lr}[1][]{\stackrel{#1}\longleftrightarrow}
\newcommand{\nlr}[1][]{\overset{#1}{\not\longleftrightarrow}}
\title{A new proof of the sharpness of the phase transition for Bernoulli percolation and the Ising model}
\author{Hugo Duminil-Copin and Vincent Tassion}
\date{\today}
\begin{document}
\maketitle

\begin{abstract}
  We provide a new proof of the sharpness of the phase transition for Bernoulli
  percolation and the Ising model. The proof applies to infinite-range models on
  arbitrary locally finite transitive infinite graphs.
  
  For Bernoulli percolation, we prove finiteness of the susceptibility in the
  subcritical regime $\beta<\beta_c$, and the mean-field lower bound
  $\bbP_\beta[0\longleftrightarrow\infty]\ge (\beta-\beta_c)/\beta$ for
  $\beta>\beta_c$. For finite-range models, we also prove that for any
  $\beta<\beta_c$, the probability of an open path from the origin to distance
  $n$ decays exponentially fast in $n$.

  For the Ising model, we prove finiteness of the susceptibility for
  $\beta<\beta_c$, and the mean-field lower bound $\langle
  \sigma_0\rangle_\beta^+\ge \sqrt{(\beta^2-\beta_c^2)/\beta^2}$ for
  $\beta>\beta_c$. For finite-range models, we also prove that the two-point
  correlation functions decay exponentially fast in the distance for
  $\beta<\beta_c$.
\end{abstract}

The paper is organized in two sections, one devoted to Bernoulli percolation,
and one to the Ising model. While both proofs are completely independent, we
wish to emphasize the strong analogy between the two strategies.

\paragraph{General notation.} Let $G=(V,E)$ be a locally finite
(vertex-)transitive infinite graph, together with a fixed origin $0\in V$. For
$n\ge0$, let
$$\Lambda_n:=\{x\in V:\mathrm{d}(x,0)\le n\},$$ where $\mathrm{d}(\cdot,\cdot)$ is the
graph distance. Consider a set of coupling constants $(J_{x,y})_{x,y\in V}$ with
$J_{x,y}=J_{y,x}\ge0$ for every $x$ and $y$ in $V$. We assume that the coupling
constants are {\em invariant} with respect to some transitively acting group.
More precisely, there exists a group $\Gamma$ of automorphisms acting
transitively on $V$ such that $J_{\gamma(x),\gamma(y)}=J_{x,y}$ for all $\gamma
\in \Gamma$. We say that $(J_{x,y})_{x,y\in V}$ is {\em finite-range} if there
exists $R>0$ such that $J_{x,y}=0$ whenever $\mathrm{d}(x,y)>R$.

\section{Bernoulli percolation}

\subsection{The main result}
Let $\bbP_\beta$ be the bond percolation measure on $G$ defined as follows: for
$x,y\in V$, $\{x,y\}$ is {\em open} with probability $1-e^{-\beta J_{x,y}}$, and
{\em closed} with probability $e^{-\beta J_{x,y}}$. We say that $x$ and $y$ are
{\em connected in} $S\subset V$ if there exists a sequence of vertices
$(v_k)_{0\le k\le K}$ in $S$ such that $v_0=x$, $v_K=y$, and $\{v_k,v_{k+1}\}$
is open for every $0\le k<K$. We denote this event by
$x\stackrel{S}{\longleftrightarrow} y$. For $A \subset V$, we write
$x\stackrel{S}{\longleftrightarrow} A$ for the event that $x$ is connected in
$S$ to a vertex in $A$. If $S=V$, we drop it from the notation. Finally, we set
$0\longleftrightarrow \infty$ if $0$ is connected to~$\Lambda_n^c$ for all
$n\ge1$. The critical parameter is defined by $$\beta_c:=\inf\{\beta\ge0:
\bbP_\beta[0\longleftrightarrow\infty]>0\}.$$

\begin{theorem}\label{thm:perco}

\begin{enumerate}
\item\label{item:1} For $\beta>\beta_c$, $\bbP_\beta[0\longleftrightarrow\infty]\ge \frac{\beta-\beta_c}{\beta} $.
\item\label{item:2} For $\beta<\beta_c$, the susceptibility is finite, i.e.
$$\sum_{x\in V}\bbP_\beta[0\longleftrightarrow x]<\infty.$$
\item\label{item:3} If $(J_{x,y})_{x,y\in V}$ is finite-range, then for any $\beta<\beta_c$, there exists $c=c(\beta)>0$ such that
$$\bbP_\beta[0\longleftrightarrow \Lambda_n^c]\le e^{-c n}\quad\text{ for all }n\ge0.$$
\end{enumerate}
\end{theorem}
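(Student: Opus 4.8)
The plan is to run the whole argument through one quantity. For a finite set $S\subset V$ with $0\in S$, set
\[
  \varphi_\beta(S)\ :=\ \sum_{x\in S}\sum_{y\notin S}\beta J_{x,y}\,\bbP_\beta\!\left[0\overset{S}{\longleftrightarrow}x\right],
\]
and put $\widetilde\beta_c:=\sup\{\beta\ge 0:\ \varphi_\beta(S)<1\text{ for some finite }S\ni 0\}$. Since $\beta\mapsto\beta J_{x,y}$ and $\beta\mapsto\bbP_\beta[0\overset{S}{\longleftrightarrow}x]$ are nondecreasing, $\varphi_\beta(S)$ is nondecreasing in $\beta$; hence for $\beta<\widetilde\beta_c$ there is a finite $S\ni 0$ with $\varphi_\beta(S)<1$, while for $\beta>\widetilde\beta_c$ every finite $S\ni 0$ satisfies $\varphi_\beta(S)\ge 1$. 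I will prove: (a) if $\beta<\widetilde\beta_c$ then items~\ref{item:2} and~\ref{item:3} hold and $\bbP_\beta[0\longleftrightarrow\infty]=0$; (b) if $\beta>\widetilde\beta_c$ then $\bbP_\beta[0\longleftrightarrow\infty]\ge(\beta-\widetilde\beta_c)/\beta$. Part (a) gives $\widetilde\beta_c\le\beta_c$ and part (b) gives $\widetilde\beta_c\ge\beta_c$, so $\widetilde\beta_c=\beta_c$ and all three items follow.

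For (a), fix $\beta<\widetilde\beta_c$ and a finite $S\ni 0$ with $\varepsilon:=\varphi_\beta(S)<1$. Revealing the cluster $\mathcal C=\{x\in S:0\overset{S}{\longleftrightarrow}x\}$ of $0$ inside $S$, and using that the edges between $\mathcal C$ and $V\setminus S$ retain their original law together with $1-e^{-\beta J_{u,v}}\le\beta J_{u,v}$ and FKG-monotonicity, one obtains the Simon--Lieb-type inequality
\[
  \bbP_\beta[0\longleftrightarrow x]\ \le\ \mathbbm{1}_{x\in S}\ +\ \sum_{u\in S}\sum_{v\notin S}\bbP_\beta\!\left[0\overset{S}{\longleftrightarrow}u\right]\beta J_{u,v}\,\bbP_\beta[v\longleftrightarrow x].
\]
Iterating and using transitivity, each successive layer contributes a factor $\varphi_\beta(S)=\varepsilon$ and the term that terminates at layer $k$ sums over $x\in V$ to at most $|S|\varepsilon^{k}$ (the remainder tending to $0$); hence $\sum_x\bbP_\beta[0\longleftrightarrow x]\le|S|/(1-\varepsilon)<\infty$, which is item~\ref{item:2} (and forces $\bbP_\beta[0\longleftrightarrow\infty]=0$). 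If the couplings have range $R$ and $S\subset\Lambda_L$, applying the same bound with the event $\{v\longleftrightarrow\Lambda_n^c\}$ in place of $\{v\longleftrightarrow x\}$ confines the exit vertices $v$ to $\Lambda_{L+R}$, so recentering by transitivity gives $\bbP_\beta[0\longleftrightarrow\Lambda_n^c]\le\varepsilon\,\bbP_\beta[0\longleftrightarrow\Lambda_{n-L-R}^c]$ and therefore the exponential decay of item~\ref{item:3}.

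For (b), fix $\beta>\widetilde\beta_c$ and write $\theta_n(\beta):=\bbP_\beta[0\longleftrightarrow\Lambda_n^c]$. By Russo's formula, $\theta_n'(\beta)=\sum_{\{x,y\}}J_{x,y}e^{-\beta J_{x,y}}\,\bbP_\beta[\{x,y\}\text{ pivotal}]$. On the event $\{0\nlr\Lambda_n^c\}$, let $\mathcal S$ be the (finite) set of vertices not connected to $\Lambda_n^c$, which contains $0$; conditionally on $\{\mathcal S=S\}$ the boundary edges of $S$ are closed, the edges inside $S$ keep their i.i.d. law, and a boundary edge $\{x,y\}$ with $x\in S$ is pivotal exactly when $0\overset{S}{\longleftrightarrow}x$. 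Since $\{0\overset{S}{\longleftrightarrow}x\}$ depends only on the edges inside $S$, it is independent of $\{\mathcal S=S\}$, and summing over $S$ yields
\[
  \theta_n'(\beta)\ \ge\ \sum_{S\ni 0}\bbP_\beta[\mathcal S=S]\sum_{x\in S}\sum_{y\notin S}J_{x,y}e^{-\beta J_{x,y}}\,\bbP_\beta\!\left[0\overset{S}{\longleftrightarrow}x\right]\ \ge\ \frac{1-\theta_n(\beta)}{\beta}\,\inf_{S\ni 0\text{ finite}}\varphi_\beta(S)\ \ge\ \frac{1-\theta_n(\beta)}{\beta},
\]
where the middle step reconciles the Russo weight $J_{x,y}e^{-\beta J_{x,y}}$ with $\beta J_{x,y}$ (for instance by splitting each coupling into many small parallel ones, under which neither $\theta_n$ nor $\varphi_\beta$ changes but $e^{-\beta J_{x,y}}\to 1$), and the last step uses $\varphi_\beta(S)\ge 1$. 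Integrating $\theta_n'/(1-\theta_n)\ge 1/\beta$ from $\widetilde\beta_c$ to $\beta$ gives $1-\theta_n(\beta)\le\widetilde\beta_c/\beta$, and letting $n\to\infty$ gives $\bbP_\beta[0\longleftrightarrow\infty]\ge(\beta-\widetilde\beta_c)/\beta>0$; this is item~\ref{item:1} once $\widetilde\beta_c=\beta_c$, and it shows $\beta_c\le\widetilde\beta_c$.

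The crux is the differential inequality in (b): one must check that on $\{0\nlr\Lambda_n^c\}$ the event $\{\mathcal S=S\}$ imposes no constraint on the edges inside $S$, so that $\bbP_\beta[\{x,y\}\text{ pivotal},\,\mathcal S=S]=\bbP_\beta[0\overset{S}{\longleftrightarrow}x]\,\bbP_\beta[\mathcal S=S]$ for $x\in S$, $y\notin S$, and then pass from the weight produced by Russo's formula to the weight appearing in $\varphi_\beta$. The Simon--Lieb step and the two iteration bookkeepings in (a) are, by comparison, routine.
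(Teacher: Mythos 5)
Your proof is correct in its main thrust and follows the paper's strategy very closely: define $\varphi_\beta(S)$ and $\tilde\beta_c$, derive a differential inequality by Russo's formula combined with a decomposition over the set $\calS$ of vertices not connected to $\Lambda_n^c$, and iterate a Simon--Lieb type inequality for the subcritical items. Your derivation of the Simon--Lieb inequality by revealing the cluster of $0$ in $S$ (in place of the paper's BK-inequality argument) is a valid alternative. Two technical points, however, deserve attention.

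First, your choice $\varphi_\beta(S)=\sum_{x\in S}\sum_{y\notin S}\beta J_{x,y}\,\bbP_\beta[0\lr[S]x]$, rather than the paper's $\sum_{x\in S}\sum_{y\notin S}(1-e^{-\beta J_{x,y}})\,\bbP_\beta[0\lr[S]x]$, leaves a spare factor $e^{-\beta J_{x,y}}$ after the Russo step, which you propose to eliminate by splitting each coupling into $N$ parallel ones and letting $N\to\infty$. This does work upon inspection, but it is an unnecessary detour: since pivotality of $\{x,y\}$ is independent of the state of $\{x,y\}$, one has the \emph{equality}
$$e^{-\beta J_{x,y}}\,\bbP_\beta[\{x,y\}\text{ pivotal}]\ =\ \bbP_\beta[\{x,y\}\text{ pivotal and closed}]\ =\ \bbP_\beta[\{x,y\}\text{ pivotal},\ 0\nlr\Lambda_n^c],$$
so Russo's formula already reads $\frac{d}{d\beta}\bbP_\beta[0\lr\Lambda_n^c]=\sum_{\{x,y\}}J_{x,y}\,\bbP_\beta[\{x,y\}\text{ pivotal},\ 0\nlr\Lambda_n^c]$, and after decomposing over $\calS$ the factor $e^{-\beta J_{x,y}}$ has already disappeared. (The paper instead keeps $1-e^{-\beta J_{x,y}}$ inside $\varphi_\beta$ and uses the elementary inequality $J_{x,y}\ge(1-e^{-\beta J_{x,y}})/\beta$.)

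Second, and more substantively, your argument for Item~\ref{item:2} is circular as written. After $k$ iterations of the Simon--Lieb inequality, the ``remainder'' is a weighted sum, over exit vertices $v$ at layer $k$, of $\sum_{x\in V}\bbP_\beta[v\lr x]$, which by transitivity equals the very susceptibility you are trying to prove finite; there is no a priori reason for this remainder to ``tend to $0$''. The correct way to close the argument is the paper's finite-volume bootstrap: set $\chi(\Lambda,\beta):=\max_{u\in\Lambda}\sum_{v\in\Lambda}\bbP_\beta[u\lr[\Lambda]v]$, which is trivially finite, apply Simon--Lieb \emph{once} to obtain $\chi(\Lambda,\beta)\le|S|+\varphi_\beta(S)\,\chi(\Lambda,\beta)$, solve to get $\chi(\Lambda,\beta)\le|S|/(1-\varphi_\beta(S))$ uniformly in $\Lambda$, and then let $\Lambda\nearrow V$.
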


\bigbreak
Let us describe the proof quickly.
For $\beta>0$ and a finite subset $S$ of $V$, define 
\begin{equation}\label{eq:1}
\varphi_\beta(S):=\sum_{x\in S}\sum_{y\notin S}(1-e^{-\beta J_{x,y}})\bbP_\beta\big(0\stackrel{S}{\longleftrightarrow }x\big).
\end{equation}
This quantity can be interpreted as the expected number of open edges
on the ``external boundary'' of $S$ that are connected to $0$ by an
open path of vertices in $S$. Also introduce
\begin{equation}
  \label{eq:2}
    \tilde\beta_c:=\sup\{\beta\ge0: \varphi_\beta(S) <1\text{ for some
      finite $S\subset V$ containing $0$} \}.
\end{equation}
In order to prove Theorem~\ref{thm:perco}, we show that
Items~\ref{item:1}, \ref{item:2} and~\ref{item:3} hold with
$\tilde{\beta_c}$ in place of $\beta_c$. This directly implies that
$\tilde{\beta_c}=\beta_c$, and thus Theorem~\ref{thm:perco}.

 The quantity $\varphi_\beta(S)$ appears
naturally when differentiating the probability $\P[0\lr\Lambda_n^c]$
with respect to $\beta$. Indeed, a simple computation presented in
Lemma~\ref{lem:meanField} provides the following differential inequality \begin{equation}
   \label{eq:3}
   \frac d{d\beta} \P[0\lr\Lambda_n^c]\ge \tfrac1\beta \inf_{\substack{S\subset\Lambda_n\\0\in S}}\varphi_\beta(S) \cdot (1-\P[0\lr\Lambda_n^c]).
  \end{equation}
By integrating \eqref{eq:3} between $\tilde \beta_c$ and $\beta>\tilde\beta_c$ and
then letting $n$ tend to infinity, we obtain $\P[0\lr\infty]\ge
\frac{\beta-\tilde\beta_c}{\beta}$. 

Now consider $\beta<\tilde\beta_c$. The existence of a finite set $S$
containing the origin such that $\varphi_\beta(S)<1$, together with
the BK-inequality, imply that the expected size of the cluster the
origin is finite. 

\subsection{Comments and consequences}\label{sec:comment}

\begin{description}
\item[Bibliographical comments.] Theorem~\ref{thm:perco} was first
  proved in \cite{AizBar87} and \cite{Men86} for Bernoulli percolation
  on the $d$-dimensional hypercubic lattice. The proof was extended to
  general quasi-transitive graphs in \cite{antunovic2008sharpness}.
  The first item was  proved in \cite{ChaCha87}.

\item[Nearest-neighbor percolation.] We recover easily the standard results for
  nearest-neighbor model by setting $J_{x,y}=0$ if $\{x,y\}\notin E$,
  $J_{x,y}=1$ if $\{x,y\}\in E$, and $p=1-e^{-\beta}$. In this context, one can
  obtain the inequality $\mathbb P_p[0\lr\infty]\ge\frac{p-p_c}{p(1-p_c)}$ for
  $p\ge p_c$ by introducing
  $$\varphi_p(S)=p\sum_{x\in S}\sum_{\substack{y\notin S\\ \{x,y\}\in E}}\mathbb P_p[0\lr[S]x].$$
   This lower
  bound is slightly better than Item~\ref{item:1} of
  Theorem~\ref{thm:perco} and is provided by little modifications in
  our proof (see \cite{DumTas15} for a presentation of the proof in
  this context).

\item[Site percolation.] As in \cite{AizBar87}, the proof may be adapted to site percolation on transitive graphs. In this context, one can
  obtain the inequality $\mathbb
  P_p[0\lr\infty]\ge \frac1{d-1}\frac{p-p_c}{1-p_c}$ ($d$ is the degree of $G$) for $p\ge p_c$ by introducing
  $$\varphi_p(S)=\sum_{x\in S}\sum_{\substack{y\notin S\\ \{x,y\}\in E}}\mathbb P_p[0\lr[S]x].$$

\item[Finite susceptibility against exponential decay.] Finite
  susceptibility does not always imply exponential decay of
  correlations for infinite-range models. Conversely, on graphs with
  exponential growth, exponential decay does not imply finite
  susceptibility. Hence, in general, the second condition of
  Theorem~\ref{thm:perco} is neither weaker nor stronger than the
  third one.

\item[Percolation on the square lattice.] On the square lattice, the inequality $p_c\ge1/2$ was first obtained by Harris in \cite{Har60} (see also the short proof of Zhang presented in
  \cite{Gri99a}). The other inequality $p_c\le 1/2$ was first proved by Kesten in \cite{Kes80} using a delicate geometric construction involving crossing events. Since then, many other proofs invoking exponential decay in the subcritical phase (see \cite{Gri99a}) or sharp threshold arguments (see e.g. \cite{BolRio06c}) have been found. Here, Theorem~\ref{thm:perco} provides a short proof of exponential decay and therefore a short alternative to these proofs. For completeness, let us sketch how exponential decay implies that $p_c\le1/2$: item~\ref{item:3}
  implies that for $p<p_c$ the probability of an open path from left to right in a $n$ by $n$
  square tends to $0$ as $n$ goes to infinity. But self-duality implies that this does not happen when
  $p=1/2$, thus implying that $p_c\le 1/2$. 
  
\item[Lower bound on $\beta_c$.] Since
  $\varphi_\beta(\{0\})=\sum_{y\in V}1-e^{-\beta J_{0,y}}$, we obtain
  a lower bound on $\beta_c$ by taking the solution of the
  equation $\sum_{y\in V}1-e^{-\beta J_{0,y}}=1$.
\item[Behaviour at $\beta_c$.] Under the hypothesis that $\sum_{y\in V}
  J_{0,y}<\infty$, the set \[\{\beta\ge0: \varphi_\beta(S) <1\text{ for some
    finite $S\subset V$ containing $0$ }\} \] defining $\tilde \beta_c$ in
  Equation~\eqref{eq:2} is open. In particular, we have that at
  $\beta=\beta_c=\tilde\beta_c$, $\varphi_\beta(S)\ge 1$ for every finite $S\ni
  0$. This implies the following classical result.
 \begin{proposition}[\cite{AizNew84}]\label{prop:a}
 We have $\displaystyle \sum_{x\in V}\bbP_{\beta_c}[0\longleftrightarrow x]=\infty.$
 \end{proposition}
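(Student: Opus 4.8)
The plan is to show the contrapositive: if $\sum_{x\in V}\bbP_{\beta_c}[0\leftrightarrow x]<\infty$, then we can find a finite set $S\ni 0$ with $\varphi_{\beta_c}(S)<1$, which under the hypothesis $\sum_y J_{0,y}<\infty$ would place $\beta_c$ strictly inside the open set defining $\tilde\beta_c$, contradicting $\beta_c=\tilde\beta_c$ (equivalently, contradicting that Item~\ref{item:1} fails below $\beta_c$). So the real content is to extract, from finite susceptibility at $\beta_c$, a finite set on which $\varphi_{\beta_c}$ drops below $1$.

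First I would bound $\varphi_{\beta_c}(S)$ for $S=\Lambda_n$ by dropping the restriction ``connected in $S$'': since $\bbP_{\beta_c}[0\overset{\Lambda_n}{\longleftrightarrow}x]\le \bbP_{\beta_c}[0\leftrightarrow x]$, we get
$$\varphi_{\beta_c}(\Lambda_n)\le \sum_{x\in \Lambda_n}\sum_{y\notin \Lambda_n}(1-e^{-\beta_c J_{x,y}})\,\bbP_{\beta_c}[0\leftrightarrow x].$$
Using $1-e^{-t}\le t$ and translation invariance of the coupling constants, $\sum_{y\notin\Lambda_n}(1-e^{-\beta_c J_{x,y}})\le \beta_c\sum_{y\in V}J_{x,y}=\beta_c\sum_{y\in V}J_{0,y}=:\beta_c M<\infty$. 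This gives the crude bound $\varphi_{\beta_c}(\Lambda_n)\le \beta_c M\sum_{x\in\Lambda_n}\bbP_{\beta_c}[0\leftrightarrow x]$, which is finite but not small. To get it below $1$ I would instead keep the full sum: write $\varphi_{\beta_c}(\Lambda_n)\le \sum_{x\in V}\bigl(\sum_{y\notin\Lambda_n}(1-e^{-\beta_c J_{x,y}})\bigr)\bbP_{\beta_c}[0\leftrightarrow x]$. For each fixed $x$, the inner sum $\sum_{y\notin\Lambda_n}(1-e^{-\beta_c J_{x,y}})$ is a tail of the convergent series $\sum_{y}(1-e^{-\beta_c J_{x,y}})$ and tends to $0$ as $n\to\infty$ (for finite-range couplings it is exactly $0$ once $n$ exceeds $d(0,x)+R$); moreover it is dominated uniformly in $n$ by $\beta_c M$. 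Since $\sum_x \beta_c M\,\bbP_{\beta_c}[0\leftrightarrow x]<\infty$ by hypothesis, dominated convergence gives $\varphi_{\beta_c}(\Lambda_n)\to 0$ as $n\to\infty$. In particular $\varphi_{\beta_c}(\Lambda_n)<1$ for $n$ large.

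This already contradicts the definition of $\tilde\beta_c$: we have produced a finite set $S=\Lambda_n$ containing $0$ with $\varphi_{\beta_c}(S)<1$, so $\beta_c\le\tilde\beta_c$ would not be tight — more precisely, since $\beta\mapsto\varphi_\beta(\Lambda_n)$ is continuous, $\varphi_\beta(\Lambda_n)<1$ persists for $\beta$ slightly larger than $\beta_c$, so $\tilde\beta_c>\beta_c$, contradicting $\tilde\beta_c=\beta_c$ established in the proof of Theorem~\ref{thm:perco}. Hence $\sum_{x\in V}\bbP_{\beta_c}[0\leftrightarrow x]=\infty$.

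The main obstacle is the interchange of limit and sum: one must check that the per-$x$ tail $\sum_{y\notin\Lambda_n}(1-e^{-\beta_c J_{x,y}})$ really is dominated by a summable-against-$\bbP_{\beta_c}[0\leftrightarrow\cdot]$ function uniformly in $n$, which is where the hypothesis $\sum_y J_{0,y}<\infty$ enters and is used in an essential way (this is exactly the hypothesis needed to make the set in \eqref{eq:2} open, as remarked before the proposition). Everything else — the monotonicity $\bbP_{\beta_c}[0\overset{S}{\longleftrightarrow}x]\le\bbP_{\beta_c}[0\leftrightarrow x]$, the elementary inequality $1-e^{-t}\le t$, and continuity of $\varphi_\beta(\Lambda_n)$ in $\beta$ for fixed $n$ — is routine.
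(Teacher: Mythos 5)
Your proof is correct, and it rests on the same key observation that the paper records just before the proposition, namely that under the hypothesis $\sum_{y\in V}J_{0,y}<\infty$ one has $\varphi_{\beta_c}(S)\ge 1$ for every finite $S\ni 0$. The technical route to the contradiction is slightly different, though: the paper sums the lower bound $\varphi_{\beta_c}(\Lambda_n)\ge 1$ over all $n\ge 1$ and compares $\sum_{n\ge1}\varphi_{\beta_c}(\Lambda_n)=\infty$ directly against a multiple of $\sum_{x\in V}\bbP_{\beta_c}[0\leftrightarrow x]$ in a single displayed inequality, whereas you argue by contrapositive and use dominated convergence to show that finite susceptibility at $\beta_c$ would force $\varphi_{\beta_c}(\Lambda_n)\to 0$. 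Both devices extract the same information (that on large boxes $\varphi_{\beta_c}$ is controlled by the tail of the susceptibility), and your dominated-convergence version is arguably the more transparent treatment of the infinite-range case, since it isolates exactly where the hypothesis $\sum_y J_{0,y}<\infty$ enters. One small remark: your final continuity step (arguing that $\varphi_\beta(\Lambda_n)<1$ persists for $\beta$ slightly above $\beta_c$, hence $\tilde\beta_c>\beta_c$) is superfluous; the moment you have a single finite $\Lambda_n\ni 0$ with $\varphi_{\beta_c}(\Lambda_n)<1$, you already contradict $\varphi_{\beta_c}(S)\ge1$ for every finite $S\ni 0$, which is the form of the observation the paper actually uses.
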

 \begin{proof}
 Simply write
 $$\Big(\sum_{y\in V}1-e^{-\beta_c J_{0,y}}\Big)\cdot\displaystyle\sum_{x\in V}\bbP_{\beta_c}[0\longleftrightarrow x]\ge \sum_{n\ge1}\varphi_\beta(\Lambda_n)=\infty.$$
 \end{proof}
\item[Semi-continuity of $\beta_c$.] Consider the nearest-neighbor
  model. Since $\tilde{\beta_c}$ is defined in terms of finite sets,
  one can see that $\tilde{\beta_c}$ is lower semi-continuous when seen
  as a function of the graph in the following sense. Let $G$ be an
  infinite locally finite transitive graph. Let $(G_n)$ be a sequence
  of infinite locally finite transitive graphs such that the
  balls of radius $n$ around the origin in $G_n$ and $G$ are the same.
  Then,
  \begin{equation}
    \liminf \tilde\beta_c(G_n)\ge \tilde\beta_c(G).\label{eq:4}
  \end{equation}
  The equality $\beta_c=\tilde\beta_c$ implies that the semi-continuity
  \eqref{eq:4} also holds for $\beta_c$ (this also followed from \cite{Ham57} and the exponential decay in subcritical, but the definition of $\tilde\beta_c$ illustrates this property readily). The locality conjecture, due to
  Schramm and presented in \cite{BenNacPer11}, states that for any $\ep>0$, the map $G\mapsto\beta_c(G)$ should be continuous on the set of graphs with $\beta_c<1-\ep$. The discussion above shows that the hard
  part in the locality conjecture is the upper semi-continuity. 
\item[Dependent models.] For dependent percolation models, the proof
  does not extend in a trivial way, mostly due to the fact that the BK
  inequality is not available in general. Nevertheless, this new
  strategy may be of some use. For instance, for random-cluster models
  on the square lattice, a proof (see \cite{DumSidTas13}) based on the
  strategy of this paper and the parafermionic observable offers an
  alternative to the standard proof of \cite{BefDum12} based on sharp
  threshold theorems.
\item[Oriented percolation.] The proof applies mutatis mutandis to oriented percolation. 
\item[Percolation with a magnetic field.] In \cite{AizBar87}, the authors
  consider a percolation model with magnetic field defined as follows. 
  Add a {\em ghost vertex} $g\notin V$ and
  consider that $\{x,g\}$ is open with probability $1-e^{-h}$, independently for
  any $x\in V$. Let $\bbP_{\beta,h}$ be the measure obtained from $\bbP_\beta$
  by adding the edges $\{x,g\}$. An important results in \cite{AizBar87} is the following mean-field lower bound which is instrumental in the study of percolation in high dimensions (see e.g. \cite{AizNew84}).
  
  \begin{proposition}[\cite{AizBar87}]\label{prop:magnetic field}
  There exists a constant $c>0$ such that for any $h>0$, 
  $$\bbP_{\beta_c,h}[0\longleftrightarrow g]\ge c\sqrt h.$$
  \end{proposition}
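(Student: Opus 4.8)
The plan is to follow the proof of Item~\ref{item:1} of Theorem~\ref{thm:perco} with the ghost vertex $g$ playing the role of $\Lambda_n^c$, and to complement the resulting lower bound on the $\beta$‑derivative of the magnetization with an Aizenman--Barsky‑type \emph{upper} bound; evaluating both at $\beta_c$ gives a closed differential inequality in $h$ that integrates to $\sqrt h$. Throughout I assume $\sum_{y\in V}J_{0,y}<\infty$ (needed already for $\beta_c>0$) and write $J:=\sum_{y\in V}J_{0,y}$, $M(\beta,h):=\bbP_{\beta,h}[0\lr g]$, and let $\mathcal{C}$ be the cluster of $0$ for the percolation $\bbP_\beta$ (without ghost vertex). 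Conditioning on the bulk edges, the ghost edges emanating from $\mathcal{C}$ are i.i.d.\ $\mathrm{Bernoulli}(1-e^{-h})$, so
\[
1-M(\beta,h)=\bbE_\beta\!\left[e^{-h|\mathcal{C}|}\right],\qquad \partial_h M(\beta,h)=\bbE_\beta\!\left[|\mathcal{C}|\,e^{-h|\mathcal{C}|}\right]
\]
(with the convention $e^{-h\cdot\infty}=0$); in particular $h\mapsto M(\beta_c,h)$ is continuous and non‑decreasing with $M(\beta_c,0)=\bbP_{\beta_c}[0\lr\infty]$, and we may assume $M(\beta_c,0)<\tfrac12$ since otherwise the bound is trivial for small $h$.

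\emph{Step 1: lower bound on $\partial_\beta M$ via $\varphi$.} I would re‑run the computation behind \eqref{eq:3} with $g$ in place of $\Lambda_n^c$: the cluster decomposition becomes $1-M(\beta,h)=\sum_{S\ni 0\text{ finite}}\bbP_\beta[\mathcal{C}=S]\,e^{-h|S|}$, the weight $e^{-h|S|}$ being inert there, and there is no constraint $S\subset\Lambda_n$ because $g$ is adjacent to every vertex, so one is led to
\[
\partial_\beta M(\beta,h)\ \ge\ \frac1\beta\Big(\inf_{S\ni 0\text{ finite}}\varphi_\beta(S)\Big)\big(1-M(\beta,h)\big)\qquad(\beta,h>0).
\]
By the discussion under ``Behaviour at $\beta_c$'' (which uses $\sum_y J_{0,y}<\infty$), $\varphi_{\beta_c}(S)\ge 1$ for every finite $S\ni 0$, hence $\partial_\beta M(\beta_c,h)\ge \tfrac1{\beta_c}\big(1-M(\beta_c,h)\big)$.

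\emph{Step 2: Aizenman--Barsky upper bound on $\partial_\beta M$.} Writing $p_e:=1-e^{-\beta J_{x,y}}$ for the parameter of $e=\{x,y\}$, Russo's formula gives $\partial_\beta M=\sum_{e=\{x,y\}}\tfrac{dp_e}{d\beta}\,\bbP_{\beta,h}[e\text{ pivotal for }0\lr g]$, and since pivotality of $e$ does not depend on the state of $e$, each summand equals $J_{x,y}\,\bbP_{\beta,h}[e\text{ pivotal},\,e\text{ closed}]$. On $\{e=\{x,y\}\text{ pivotal},\,e\text{ closed}\}$ one has $0\nlr g$, one endpoint (say $x$) lies in $\mathcal{C}$, and the other satisfies $y\lr g$ with $y\notin\mathcal{C}$; conditioning on the bulk configuration and using independence of the ghost edges,
\[
\bbP_{\beta,h}[0\nlr g,\,x\in\mathcal{C},\,y\lr g,\,y\notin\mathcal{C}]=\bbE_\beta\!\left[\mathbbm 1_{\{x\in\mathcal{C},\,y\notin\mathcal{C}\}}\,e^{-h|\mathcal{C}|}\big(1-e^{-h|\mathcal{C}_y|}\big)\right],
\]
where $\mathcal{C}_y$ is the bulk cluster of $y$. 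Conditionally on $\{\mathcal{C}=A\}$ with $y\notin A$, $\mathcal{C}_y$ is the cluster of $y$ in the bulk percolation restricted to $V\setminus A$, hence stochastically dominated by the free cluster of $y$; by transitivity $\bbE_\beta[1-e^{-h|\mathcal{C}_y|}\mid\mathcal{C}=A]\le M(\beta,h)$. Summing over $\{x,y\}$ with weight $J_{x,y}$, symmetrizing, and bounding $\sum_{x\in A}\sum_{y\notin A}J_{x,y}\le J|A|$, one gets
\[
\partial_\beta M(\beta,h)\ \le\ J\,M(\beta,h)\,\bbE_\beta\!\left[|\mathcal{C}|\,e^{-h|\mathcal{C}|}\right]\ =\ J\,M(\beta,h)\,\partial_h M(\beta,h).
\]
Combining with Step~1 at $\beta=\beta_c$ yields $\tfrac1{\beta_c}\big(1-M(\beta_c,h)\big)\le J\,M(\beta_c,h)\,\partial_h M(\beta_c,h)=\tfrac J2\,\partial_h\big(M(\beta_c,h)^2\big)$. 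Since $M(\beta_c,\cdot)$ is continuous with $M(\beta_c,0)<\tfrac12$, we have $1-M(\beta_c,h)\ge\tfrac12$ on some interval $(0,h_0]$, so $\partial_h\big(M(\beta_c,h)^2\big)\ge \tfrac1{J\beta_c}$ there; integrating from $0$ and using $M(\beta_c,0)\ge 0$ gives $M(\beta_c,h)^2\ge h/(J\beta_c)$, i.e.\ $M(\beta_c,h)\ge c\sqrt h$ with $c=1/\sqrt{J\beta_c}$, for all small $h>0$ — which is the asserted bound.

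\emph{Main obstacle.} The delicate point is Step~2: bounding $1-e^{-h|\mathcal{C}_y|}$ crudely by $1$ only gives $\partial_\beta M\le J\,\partial_h M$, which fed into Step~1 yields merely $M(\beta_c,h)\gtrsim h$, far from $\sqrt h$. One must extract the extra factor $M(\beta,h)$ carried by the ``second'' connection to $g$, and that is exactly where one uses that the cluster of $y$ avoiding $\mathcal{C}$ is stochastically smaller than the free cluster of $y$, together with $\bbE_\beta[1-e^{-h|\mathcal{C}_y|}]=M(\beta,h)$. I would also carry out Steps~1 and~2 in finite volume — replacing $M$ by $\bbP_{\beta,h}[0\lr g\ \text{within}\ \Lambda_n]$, a polynomial in $\beta$ and $h$ — and only pass to the limit $n\to\infty$ at the end, as in the proof of Item~\ref{item:1}, to avoid worrying about differentiability of $M$ in $\beta$ and about the validity of Russo's formula on an infinite, possibly infinite‑range, graph.
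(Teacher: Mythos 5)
Your proof is correct and follows the paper's argument: Step~1 is exactly the paper's observation that the differential inequality \eqref{eq:6} remains valid with $\bbP_{\beta,h}[0\lr\{g\}\cup\Lambda^c]$ in place of $\bbP_\beta[0\lr\Lambda^c]$, Step~2 is the Aizenman--Barsky inequality $\partial_\beta M\le\big(\sum_y J_{0,y}\big)\,M\,\partial_h M$, and the final integration combines the two as in \eqref{eq:12}. The only cosmetic difference is in deriving Step~2 --- the paper conditions on $\calS=\{z:z\lr\Lambda^c\cup\{g\}\text{ without using }\{x,y\}\}$ and uses independence across the boundary, while you condition on the bulk cluster $\calC$ of $0$ and invoke stochastic domination of $\calC_y$; both are standard forms of the same decoupling and yield the identical inequality.
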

  In Section~\ref{sec:90}, we provide a short proof of this proposition, using the
  same strategy as in our proof of Theorem~\ref{thm:perco}.
  
\end{description}

\subsection{Proof of  Item \ref{item:1}}

In this section, we prove that for every $\beta\ge \tilde \beta_c$,
\begin{equation}
  \label{eq:5}
  \bbP_\beta[0\longleftrightarrow\infty]\ge
  \frac{\beta-\tilde{\beta_c}}{\beta}.
\end{equation}
Let us start by the following lemma.
 \begin{lemma}\label{lem:meanField}
Let $\beta>0$ and $\Lambda\subset V$ finite,
  \begin{equation}
    \label{eq:6}
    \frac d{d\beta} \P[0\lr\Lambda^c]\ge \tfrac1\beta
    \inf_{\substack{S\subset\Lambda\\0\in S}}\varphi_\beta(S) \cdot\big(1 - \P[0\lr\Lambda^c]\big).
  \end{equation}
  \end{lemma}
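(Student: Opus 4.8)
The plan is to prove the differential inequality \eqref{eq:6} by a Russo-type formula argument. First I would recall the standard discrete Russo formula in this setting: writing $p_{x,y}=1-e^{-\beta J_{x,y}}$ for the edge probabilities, the event $\{0\lr\Lambda^c\}$ is increasing, so for each edge $e=\{x,y\}$ with $\mathrm{d}(x,0),\mathrm{d}(y,0)$ not both large (really, for all edges, since $J_{x,y}$ may be infinite-range) one has
\begin{equation*}
\frac{\partial}{\partial p_{x,y}}\P[0\lr\Lambda^c]=\P[e \text{ is pivotal for }\{0\lr\Lambda^c\}].
\end{equation*}
Since $p_{x,y}$ depends on $\beta$ through $\frac{d p_{x,y}}{d\beta}=J_{x,y}e^{-\beta J_{x,y}}$, the chain rule gives
\begin{equation*}
\frac{d}{d\beta}\P[0\lr\Lambda^c]=\sum_{\{x,y\}}J_{x,y}e^{-\beta J_{x,y}}\,\P[\{x,y\}\text{ pivotal}].
\end{equation*}

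The core combinatorial step is then to bound the pivotality probabilities from below by connection probabilities inside sets $S$ with small $\varphi_\beta(S)$. Fix a configuration $\omega$ in which $0\lr\Lambda^c$, and let $S=S(\omega)$ be the random set of vertices $z$ such that $0\lr z$ \emph{without using any edge leaving} a suitable "cut": concretely, take $S$ to be the set of vertices connected to $0$ off the edges of the boundary of the cluster, or more cleanly, condition on $\{0\nlr\Lambda^c\}$ and let $\mathcal{C}$ be the cluster of $0$; then for the event to occur one needs at least one closed edge $\{x,y\}$ with $x\in\mathcal{C}$, $y\notin\mathcal{C}$ to become open and reconnect. The cleaner route, which I would follow, is to explore from $0$ and reveal the cluster $\mathcal{C}_0$ of the origin within $\Lambda$, obtaining on the event $\{0\nlr\Lambda^c\}$ a finite connected set $S\ni 0$, $S\subset\Lambda$, which is measurable with respect to the edges inside $S$ and the edges from $S$ to $S^c$ (all of which are closed). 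Conditionally on $\{\mathcal{C}_0=S\}$, an edge $\{x,y\}$ with $x\in S$, $y\notin S$ is pivotal as soon as $y$ would be connected to $\Lambda^c$ — but we only need a lower bound, so we use: summing over $S$,
\begin{equation*}
\frac{d}{d\beta}\P[0\lr\Lambda^c]\ \ge\ \sum_{\substack{S\subset\Lambda\\0\in S}}\ \P[\mathcal{C}_0=S]\cdot \tfrac1\beta\,\sum_{x\in S}\sum_{y\notin S}(1-e^{-\beta J_{x,y}})\,\bbP_\beta\big[0\lr[S]x\mid \mathcal{C}_0=S\big],
\end{equation*}
where the factor $\tfrac1\beta$ comes from $J_{x,y}e^{-\beta J_{x,y}}\ge \tfrac1\beta(1-e^{-\beta J_{x,y}})$ (using $t e^{-t}\ge \tfrac{1}{\beta}(1-e^{-t})$ with $t=\beta J_{x,y}$, valid since $\beta te^{-\beta t}\cdot\beta\ge 1-e^{-\beta t}$ fails in general — so here I must be careful: the correct elementary inequality is $J_{x,y}e^{-\beta J_{x,y}}\ge\frac1\beta(e^{-\beta J_{x,y}}-e^{-2\beta J_{x,y}})\ge\ldots$; I expect the honest bound is $\beta J\,e^{-\beta J}\ge 1-e^{-\beta J}$ is \emph{false}, and instead one writes $\frac{dp_{x,y}}{d\beta}=J_{x,y}e^{-\beta J_{x,y}}\ge \frac{1}{\beta}\cdot\beta J_{x,y}e^{-\beta J_{x,y}}$ and uses $ue^{-u}\ge 1-e^{u}\cdot$ — the clean fact actually used is $u e^{-u} \ge e^{-u}-e^{-2u}$, equivalently $u\ge 1-e^{-u}$, which \emph{is} true; combined with $1-e^{-u}\ge (1-e^{-u})e^{-u}$... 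I will need $J e^{-\beta J}\ge \frac1\beta(1-e^{-\beta J})e^{-\beta J}$ and then absorb the extra $e^{-\beta J}$ into the definition of $\varphi$, OR simply note $\varphi_\beta(S)$ is defined with $(1-e^{-\beta J_{x,y}})$ and the pivotality already supplies one closed edge; I will reconcile the exact constant during write-up).

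The key identity making the conditional connection probability equal to the unconditional one is that, \emph{conditionally on $\mathcal{C}_0=S$}, the restriction of $\omega$ to edges within $S$ is distributed as $\bbP_\beta$ conditioned on $\{0\nlr[S]S^c\text{-boundary}\}$ — but actually the edges inside $S$ are \emph{unconstrained} except that they realize the connected cluster $S$; the standard fact (finite-energy / exploration argument) is that $\bbP_\beta[0\lr[S]x\mid\mathcal{C}_0=S]=\bbP_\beta[0\lr[S]x\mid 0\nlr[S]\text{outside via those boundary edges}]$, and since "$0\lr[S]x$" only involves edges inside $S$ while the conditioning "$\mathcal{C}_0=S$" involves edges inside $S$ realizing exactly cluster $S$ plus all $S$-to-$S^c$ edges being closed, one gets $\bbP_\beta[0\lr[S]x\mid\mathcal{C}_0=S]\ge\bbP_\beta[0\lr[S]x]$ by the FKG/Harris inequality (conditioning an increasing event on the decreasing event that boundary edges are closed can only... no — conditioning on boundary edges closed is conditioning on a decreasing event, which \emph{decreases} increasing events; so the correct statement is the \emph{reverse}, and one must instead observe that $\{0\lr[S]x\}\cap\{\mathcal{C}_0=S\}$ already forces $0\lr[S]x$, hence we bound below by summing only the contribution and regrouping via $\sum_S\P[\mathcal{C}_0=S]\bbP_\beta[0\lr[S]x\mid\mathcal{C}_0=S]\ge$ the term where the boundary-closed conditioning is handled by positive association in the \emph{other} direction — i.e. $\P[0\lr[S]x,\ \mathcal C_0=S]$ and then resum). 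I expect \textbf{this measurability/conditioning bookkeeping to be the main obstacle}: one must carefully define the explored set $S$ so that (i) $S\subset\Lambda$ and $0\in S$ on the event $0\nlr\Lambda^c$, (ii) the events $\{\mathcal C_0=S\}$ partition $\{0\nlr\Lambda^c\}$, and (iii) the conditional law of the configuration inside $S$ dominates (or equals) the unconditional one on the increasing events $\{0\lr[S]x\}$, so that $\sum_S\P[\mathcal C_0=S]\,\bbP_\beta[0\lr[S]x\mid\mathcal C_0=S]$ can be bounded below, yielding $\inf_S\varphi_\beta(S)$ after pulling out the infimum and using $\sum_S\P[\mathcal C_0=S]=\P[0\nlr\Lambda^c]=1-\P[0\lr\Lambda^c]$. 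Once the exploration/finite-energy argument is set up correctly, the remaining steps — Russo's formula, the elementary inequality on $Je^{-\beta J}$, and the resummation — are routine.
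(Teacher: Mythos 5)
Your outline has the right skeleton (Russo's formula, then condition on a random set to make $\varphi_\beta(S)$ appear), but there are two genuine gaps. First, the elementary inequality you are fumbling with is handled much more cleanly by noticing that pivotality together with $\{0\nlr\Lambda^c\}$ forces the edge to be closed: since pivotality of $\{x,y\}$ is independent of the state of $\{x,y\}$, one has $\bbP_\beta[\{x,y\}\text{ pivotal}, 0\nlr\Lambda^c]=e^{-\beta J_{x,y}}\bbP_\beta[\{x,y\}\text{ pivotal}]$, and hence $\frac{d}{d\beta}\P[0\lr\Lambda^c]=\sum_{\{x,y\}}J_{x,y}e^{-\beta J_{x,y}}\bbP_\beta[\{x,y\}\text{ pivotal}]=\sum_{\{x,y\}}J_{x,y}\bbP_\beta[\{x,y\}\text{ pivotal}, 0\nlr\Lambda^c]$. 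The only elementary fact then needed is $J_{x,y}\ge\tfrac1\beta(1-e^{-\beta J_{x,y}})$, i.e.\ $t\ge 1-e^{-t}$, which is true; the false inequality $te^{-t}\ge 1-e^{-t}$ that you were circling around never enters.

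Second, and more seriously, you chose the wrong random set. Conditioning on the cluster $\mathcal C_0$ of the origin does not work, as you yourself begin to sense when the FKG direction turns the wrong way: the event $\{\mathcal C_0=S\}$ constrains the edges \emph{inside} $S$ (they must realize exactly the connected cluster $S$), so it is not independent of events measurable with respect to edges interior to $S$, and furthermore pivotality of a boundary edge $\{x,y\}$ additionally requires $y\lr\Lambda^c$, an event depending on edges outside $S$ that does not factor out of $\{\mathcal C_0=S\}$. The paper instead sets $\mathcal S:=\{x\in\Lambda: x\nlr\Lambda^c\}$, the set of vertices \emph{not} connected to $\Lambda^c$. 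This choice is decisive for three reasons: (i) $\{\mathcal S=S\}$ is measurable with respect to the edges having at least one endpoint in $V\setminus S$, hence genuinely independent of the event $\{0\lr[S]x\}$, which depends only on edges inside $S$; (ii) on $\{\mathcal S=S\}$ with $x\in S$, $y\notin S$, the condition $y\lr\Lambda^c$ is automatic (it is built into $y\notin\mathcal S$), so pivotality reduces exactly to $\{0\lr[S]x\}$; (iii) the events $\{\mathcal S=S\}$ over $S\ni0$ partition $\{0\nlr\Lambda^c\}$. With this set the conditional probability you were worried about is an \emph{equality}, $\bbP_\beta[\{x,y\}\text{ pivotal},\mathcal S=S]=\bbP_\beta[0\lr[S]x]\,\bbP_\beta[\mathcal S=S]$, no FKG or domination argument is needed, and the $\inf_{S\ni0}\varphi_\beta(S)$ falls out after summing over $S$.
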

  Before proving this lemma, let us see how it implies~\eqref{eq:5}.
  By setting $f(\beta)=\P[0\lr\Lambda^c]$ in~\eqref{eq:6}, and
  observing that $\varphi_\beta(S)\ge1$ for any $\beta>\tilde\beta_c$, we obtain
  the following differential inequality:
\begin{equation}\label{eq:7}\frac{f'(\beta)}{1-f(\beta)}\ge\frac1\beta,\quad\text{for
    $\beta\in (\tilde\beta_c,\infty).$}\end{equation}
Integrating \eqref{eq:7} between $\tilde\beta_c$ and $\beta$ implies that
$\P[0\lr\Lambda^c]=f(\beta)\ge \frac{\beta-\tilde\beta_c}{\beta}$ for every
$\Lambda\subset V$. By letting $\Lambda$ tend to $V$, we obtain \eqref{eq:5}.
\begin{proof}[Proof of Lemma~\ref{lem:meanField}]
  Let $\beta> 0$ and $\Lambda$. Define the following
  random subset of $\Lambda$:
$$\calS:=\{x\in\Lambda\text{ such that } x\not\longleftrightarrow \Lambda^c\}.$$ 
Recall that $\{x,y\}$ is pivotal for the configuration $\omega$ and
the event $\{0\longleftrightarrow\Lambda^c\}$ if
$\omega_{\{x,y\}}\notin \{0\longleftrightarrow\Lambda^c\}$ and
$\omega^{\{x,y\}}\in \{0\longleftrightarrow\Lambda^c\}$. (The
configuration $\omega_{\{x,y\}}$, resp.\@ $\omega^{\{x,y\}}$,
coincides with $\omega$ except that the edge $\{x,y\}$ is closed,
resp.\@ open.)

Russo's formula (\cite{Rus78} or \cite[Section 2.4]{Gri99a}) implies that
\begin{align}\frac{d}{d\beta}\bbP_\beta[0\longleftrightarrow\Lambda^c] & =\sum_{\{x,y\}}J_{x,y}\bbP_\beta[\{x,y\}\text{ pivotal}]\label{eq:bateau}\\
  &\ge\tfrac1{\beta} \sum_{\{x,y\}}(1-e^{-\beta
    J_{x,y}})\bbP_\beta[\{x,y\}\text{
    pivotal\@ and }0\nlr\Lambda^c]\\
  &\ge\tfrac1{\beta} \sum_{S\ni 0}\sum_{\{x,y\}}(1-e^{-\beta
    J_{x,y}})\bbP_\beta[\{x,y\}\text{ pivotal\@ and }\calS=S].
\end{align}
In the second line, we used the inequality $t\ge1-e^{-t}$ for $t\ge0$. Observe
that the event that $\{x,y\}$ is pivotal and $\calS=S$ is nonempty only if $x\in
S$ and $y\notin S$, or $y\in S$ and $x\notin S$. Furthermore, the vertex in $S$
must be connected to 0 in $S$. We can assume without loss of generality that
$x\in S$ and $y\notin S$. Rewrite the event that $\{x,y\}$ is pivotal and
$\calS=S$ as $ \{ 0\lr[S] x\} \cap \{\calS=S\}$. Since the event $\{\calS=S\}$
and $\{0\lr[S]x\}$ are measurable with respect to the state of edges having one
endpoint in $V\setminus S$, and edges having both endpoints in $S$ respectively.
Therefore, the two events above are independent. Thus,
\[\bbP_\beta[\{x,y\}\text{ pivotal\@ and }\calS=S]=\P[ 0\lr[S] x] \P[\calS=S]. \]
Plugging this equality in the computation above, we obtain
\begin{align}\label{eq:8}
  \frac{d}{d\beta}\bbP_\beta[0\longleftrightarrow\Lambda^c]&
  \ge\tfrac1\beta \sum_{S\ni 0}\varphi_\beta(S)\bbP_\beta[\calS=S]\\
  &\ge\tfrac1\beta \Big(\inf_{S\ni 0}\varphi_\beta(S)\Big)\cdot \sum_{S\ni 0}\bbP_\beta[\calS=S].\nonumber
\end{align}
The proof follows readily since 
$$\sum_{S\ni 0}\bbP_\beta[\calS=S]=\bbP_\beta[0\in \calS]=\bbP_\beta[0\not\longleftrightarrow\Lambda^c]=1-\bbP_\beta[0\longleftrightarrow\Lambda^c].$$

\end{proof}
\begin{remark}
  In the proof above, Russo's formula is possibly used in infinite volume, since
  the model can be infinite-range. There is no difficulty resolving this
  technical issue (which does not occur for finite-range) by finite volume
  approximation. The same remark applies below when we use the BK inequality.
\end{remark}

\subsection{Proof of Items~\ref{item:2} and \ref{item:3}}
\label{sec:proof-item-2-3}
In this section, we show that Items~\ref{item:2} and \ref{item:3} in
Theorem~\ref{thm:perco} hold with $\tilde{\beta_c}$ in place of
$\beta_c$.

\begin{lemma}
  \label{lem:finiteCriterion}
  Let $\beta>0$, and $u\in S\subset A$ and $B\cap S=\emptyset$. We have 
\begin{equation}
   \bbP_\beta[u\lr[A] B]\le\sum_{x\in S}\sum_{y\notin
    S}(1-e^{-\beta J_{x,y}})\bbP_\beta[u\lr[S] x]\bbP_\beta[y\lr[A] B].
\end{equation}
\end{lemma}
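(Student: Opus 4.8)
The plan is to derive this inequality from the van den Berg--Kesten (BK) inequality, applied to three increasing events, via a deterministic decomposition of any configuration that realizes $\{u\lr[A]B\}$.

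Suppose $u\lr[A]B$ and fix a self-avoiding open path $\gamma=(v_0,\dots,v_K)$ with every $v_k\in A$, $v_0=u$, and $v_K\in B$. Since $u=v_0\in S$ while $B\cap S=\emptyset$, the path must leave $S$, so $j:=\min\{k\ge1:v_k\notin S\}$ is well defined with $1\le j\le K$. Put $x:=v_{j-1}$ and $y:=v_j$, so that $x\in S$, $y\notin S$, and the edge $\{x,y\}$ is open. Then the initial segment $(v_0,\dots,v_{j-1})$ lies in $S$ and witnesses $u\lr[S]x$; the middle is the single open edge $\{x,y\}$; and the final segment $(v_j,\dots,v_K)$ lies in $A$ and witnesses $y\lr[A]B$. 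As $\gamma$ is self-avoiding, these three witnesses use pairwise disjoint edge sets, so
\begin{equation*}
\{u\lr[A]B\}\ \subseteq\ \bigcup_{x\in S}\,\bigcup_{y\notin S}\Big(\{u\lr[S]x\}\circ\{\,\{x,y\}\ \text{open}\,\}\circ\{y\lr[A]B\}\Big),
\end{equation*}
where $\circ$ denotes disjoint occurrence.

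I would then take probabilities: the union bound over $x\in S$ and $y\notin S$, followed by the BK inequality for the increasing events $\{u\lr[S]x\}$, $\{\{x,y\}\ \text{open}\}$ and $\{y\lr[A]B\}$, gives
\begin{equation*}
\bbP_\beta[u\lr[A]B]\ \le\ \sum_{x\in S}\sum_{y\notin S}\bbP_\beta[u\lr[S]x]\cdot\bbP_\beta[\{x,y\}\ \text{open}]\cdot\bbP_\beta[y\lr[A]B],
\end{equation*}
and $\bbP_\beta[\{x,y\}\ \text{open}]=1-e^{-\beta J_{x,y}}$ yields the statement. Terms with $J_{x,y}=0$ contribute nothing, and the degenerate case $y\in B$ only weakens the bound, so neither needs separate attention.

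I do not expect a genuine obstacle here; the only technical point is that for infinite-range coupling constants BK is being invoked in infinite volume. As in the remark following Lemma~\ref{lem:meanField}, this is handled by exhausting $G$ by finite subgraphs, establishing the inequality on each one (where BK is classical), and passing to the limit --- the left-hand side converges to $\bbP_\beta[u\lr[A]B]$ and each summand on the right is monotone along the exhaustion.
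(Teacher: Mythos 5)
Your proof is essentially identical to the paper's: both decompose an open path realizing $\{u\lr[A]B\}$ at the first exit from $S$, obtain three disjointly occurring events $\{u\lr[S]x\}$, $\{\{x,y\}\text{ open}\}$, $\{y\lr[A]B\}$, and apply the BK inequality (twice). Your explicit mention of taking a self-avoiding path and the finite-volume approximation for infinite-range BK are details the paper leaves implicit or relegates to the preceding remark, but the argument is the same.
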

\begin{proof}[Proof of Lemma~\ref{lem:finiteCriterion}]
Let $u\in S$ and assume that
the event $u\lr[A] B$ holds. Consider an open path
$(v_k)_{0\le k\le K}$ from $u$ to $B$. Since $B\cap S=\emptyset$, one can define the first $k$ such that $v_{k+1}\notin S$. We obtain that the following events occur disjointly (see \cite[Section 2.3]{Gri99a} for a definition of disjoint occurrence):
\begin{itemize}[noitemsep,nolistsep]\item $u$ is connected to $v_k$ in $S$,
\item $\{v_k,v_{k+1}\}$ is open,
\item $v_{k+1}$ is connected to $B$ in $A$.
\end{itemize}
The lemma is then a direct consequence of the BK inequality applied twice ($v_k$ plays the role of $x$, and $v_{k+1}$ of $y$).
\end{proof}

  Let us now prove the second item of Theorem~\ref{thm:perco}. Fix $\beta<\tilde\beta_c$ and $S$ such that
  $\varphi_\beta(S)<1$. For $\Lambda\subset V$ finite, introduce
\[
\chi(\Lambda,\beta):=\max\Big\{\sum_{v\in\Lambda}\bbP_\beta[u\stackrel{\Lambda}\longleftrightarrow
v]~;~u\in\Lambda\Big\}.
\]
For every $u$, let $S_u$ be the image of $S$ by a fixed automorphism
sending 0 to $u$. Lemma~\ref{lem:finiteCriterion} implies that for
every $v\in \Lambda \setminus S_u$,
\begin{equation}
   \bbP_\beta[u\lr[\Lambda] v]\le\sum_{x\in S_u}\sum_{y\notin
    S_u}\bbP_\beta[u\lr[S_u] x](1-e^{-\beta J_{x,y}})\bbP_\beta[y\lr[\Lambda] v].
\end{equation}
Summing over all $v\in \Lambda \setminus S_u$, we find
\[\sum_{v\in  \Lambda \setminus S_u} \bbP_\beta[u\lr[\Lambda] v]\le\varphi_\beta(S)\chi(\Lambda,\beta).\]
Using the trivial bound $\P[u\lr v]\le 1$ for $v\in \Lambda
\cap S_u$, we obtain
\begin{equation*}
    \sum_{v\in\Lambda}\bbP_\beta[u\stackrel{\Lambda}\longleftrightarrow
    v]\le |S|+\varphi_\beta(S)\chi(\Lambda,\beta).
\end{equation*}
Optimizing over $u$, we deduce that 
\begin{equation*}
  \label{eq:9}
  \chi(\Lambda,\beta)\le \frac{|S|}{1-\varphi_\beta(S)}
\end{equation*}
which implies in particular that\[\sum_{x\in\Lambda}\bbP_\beta[0\stackrel{\Lambda}\longleftrightarrow x]\le\frac{|S|}{1-\varphi_\beta(S)}.\]
The result follows by taking the limit as $\Lambda$ tends to $V$.
\bigbreak We now turn to the proof of the third item of
Theorem~\ref{thm:perco}. A similar proof was used in \cite{Ham57}. Let $R$ be
the range of the $(J_{x,y})_{x,y\in V}$, and let $L$ be such that $S\subset
\Lambda_{L-R}$. 
Lemma~\ref{lem:finiteCriterion} implies that for $n\ge L$,
\begin{align*}\bbP_\beta[0\longleftrightarrow \Lambda_n^c]&\le \sum_{x\in S}\sum_{y\notin S}(1-e^{-\beta J_{x,y}})\bbP_\beta[0\stackrel{S}{\longleftrightarrow} x]\bbP_\beta[y\longleftrightarrow\Lambda_n^c]\\
&\le\varphi_\beta(S)\bbP_\beta[0\longleftrightarrow\Lambda_{n-L}^c].\label{eq:10}\end{align*}
In the last line, we used that $y$ is connected to distance larger than or equal to $n-L$ since $1-e^{-\beta J_{x,y}}=0$ if $x\in S$ and $y$ is not in $\Lambda_L$. By iterating, this immediately implies that 
$$\bbP_\beta[0\longleftrightarrow \Lambda_n^c]\le \varphi_\beta(S)^{\lfloor n/L\rfloor}.$$

\subsection{Proof of Proposition~\ref{prop:magnetic field}}\label{sec:90}
Let us introduce $M(\beta,h)=\bbP_{\beta,h}[0\longleftrightarrow
  g]$.   \begin{lemma}[\cite{AizBar87}]\begin{equation}\label{eq:hu}
  \frac{\partial M}{\partial\beta}\le \big(\sum_{x\in V}J_{0,x}\big)\, M\frac{\partial M}{\partial
    h}.
\end{equation}\end{lemma}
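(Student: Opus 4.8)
The plan is to mimic the proof of Lemma~\ref{lem:meanField}, but now keeping track of the ghost vertex. The key observation is that both $\tfrac{\partial M}{\partial\beta}$ and $\tfrac{\partial M}{\partial h}$ are expressible via Russo's formula as sums over pivotal edges, so the inequality should follow by comparing the two pivotal expansions edge by edge.

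First I would apply Russo's formula in the variable $\beta$. Since increasing $\beta$ only affects the edges $\{x,y\}$ with $x,y\in V$ (not the ghost edges $\{x,g\}$, whose parameter is $h$), we get
\begin{equation*}
\frac{\partial M}{\partial\beta}=\sum_{\{x,y\}\subset V}J_{x,y}\,\bbP_{\beta,h}[\{x,y\}\text{ pivotal for }0\lr g].
\end{equation*}
Similarly, differentiating in $h$ affects only the ghost edges $\{x,g\}$, each open with probability $1-e^{-h}$, so Russo's formula gives
\begin{equation*}
\frac{\partial M}{\partial h}=\sum_{x\in V}\bbP_{\beta,h}[\{x,g\}\text{ pivotal for }0\lr g].
\end{equation*}
The main step is then to bound $\bbP_{\beta,h}[\{x,y\}\text{ pivotal}]$ in terms of ghost-edge pivotalities. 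If $\{x,y\}$ is pivotal for $0\lr g$, then in the configuration where $\{x,y\}$ is open, removing it disconnects $0$ from $g$; one of the two endpoints, say $x$, is connected to $0$ (in $V$, avoiding $y$) and the other, $y$, is connected to $g$. On the $\{g\}$-side, the open path from $y$ to $g$ must use some ghost edge $\{z,g\}$, and one checks that $\{z,g\}$ is then pivotal for the event $y\lr g$ using only the edges not containing $\{x,y\}$. This lets one write, via a BK-type splitting exactly as in Lemma~\ref{lem:finiteCriterion} applied with the cluster of $0$ in the $\{x,y\}$-closed configuration playing the role of $S$,
\begin{equation*}
\bbP_{\beta,h}[\{x,y\}\text{ pivotal}]\le \sum_{z\in V}\bbP_{\beta,h}[0\lr x]\,\bbP_{\beta,h}[\{z,g\}\text{ pivotal for }y\lr g]
\end{equation*}
(with the symmetric term for $y$ in place of $x$), and by transitivity and summing over all $\{x,y\}$ this produces the factor $\sum_{x}J_{0,x}$ multiplying $M\cdot\tfrac{\partial M}{\partial h}$.

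The hard part will be making the pivotal-surgery rigorous: one must argue carefully that conditioning on the cluster $\calC$ of $0$ in the configuration with $\{x,y\}$ declared closed makes the events ``$x\in\calC$'' and ``$y\lr g$ off $\calC$ with $\{z,g\}$ pivotal'' independent, so that the product structure emerges cleanly, and then recombine these conditional bounds with the correct combinatorial weights. As in the earlier proofs, Russo's formula and the BK inequality are used in infinite volume here (because of possible infinite range and the ghost vertex), but the remark following Lemma~\ref{lem:meanField} already addresses this via finite-volume approximation, and the same device applies verbatim. Once \eqref{eq:hu} is established, the proof of Proposition~\ref{prop:magnetic field} will proceed by combining it with the lower bound $\tfrac{\partial M}{\partial\beta}\ge \tfrac1\beta\inf_S\varphi_\beta(S)(1-M)$ from the percolation argument at $\beta=\beta_c$, where $\varphi_{\beta_c}(S)\ge1$, to obtain a differential inequality in $h$ of the form $M\tfrac{\partial M}{\partial h}\gtrsim 1-M\gtrsim \text{const}$ for small $h$, which integrates to $M(\beta_c,h)^2\gtrsim h$.
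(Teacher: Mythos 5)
Your high-level plan --- Russo in $\beta$, then relate pivotal $V$-edges to ghost edges --- is not what the paper does, and as written it contains two genuine errors.

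First, the ghost-edge ``surgery'' is false. You claim that if $\{x,y\}$ is pivotal (so, off $\{x,y\}$, $y\lr g$ and $0\nlr g$), then ``one checks that $\{z,g\}$ is pivotal for the event $y\lr g$'' for the ghost edge $\{z,g\}$ used by some open path from $y$ to $g$. This need not hold: $y$ may be connected to $g$ through several vertex-disjoint paths using distinct ghost edges, in which case no single ghost edge is pivotal for $y\lr g$. There is nothing in the pivotality of the $V$-edge $\{x,y\}$ that forces the ghost side of the configuration to hang by a single ghost edge.

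Second, even setting that aside, the shape of your proposed bound
$$\bbP_{\beta,h}[\{x,y\}\text{ pivotal}]\le\sum_{z}\bbP_{\beta,h}[0\lr x]\,\bbP_{\beta,h}[\{z,g\}\text{ pivotal for }y\lr g]$$
cannot produce the factor $M\,\partial_h M$. Summing over $\{x,y\}$ with weight $J_{x,y}$, the factor $\sum_x\bbP_{\beta,h}[0\lr x]$ is the (possibly infinite) susceptibility, not $M=\bbP_{\beta,h}[0\lr g]$, and $\sum_z\bbP_{\beta,h}[\{z,g\}\text{ pivotal for }y\lr g]$ is essentially $e^h\partial_h M$, so the bound you would obtain is of the form $(\sum J_{0,\cdot})\cdot\chi\cdot\partial_h M$, which is not \eqref{eq:hu} and is in fact useless at and above $\beta_c$ where $\chi=\infty$.

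The paper's actual proof keeps Russo only for the $\beta$-derivative, then conditions on the set $\calS$ of vertices connected to $\Lambda^c\cup\{g\}$ off $\{x,y\}$ to split the pivotality probability the other way around:
$$\bbP_{\beta,h}[\{x,y\}\text{ pivotal}]\le\bbP_{\beta,h}[y\lr\Lambda^c\cup\{g\}]\cdot\bbP_{\beta,h}[0\lr x,\ 0\nlr\Lambda^c\cup\{g\}].$$
After $\Lambda\nearrow V$, the first factor is $\bbP_{\beta,h}[y\lr g]=M$ by transitivity (this is where the factor $M$ comes from --- a connection probability, not a cluster-size sum), and summing the second factor over $x$ gives $\sum_{x}\bbP_{\beta,h}[0\lr x,\ 0\nlr g]$. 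Crucially, $\partial_h M$ is identified with this last sum not via Russo's formula but by directly differentiating
$$M=1-\sum_{n\ge0}\bbP_{\beta,h}[|\sfC|=n]\,e^{-nh},$$
where $\sfC$ is the cluster of $0$ in $V$. No ghost-edge pivotality argument appears at all. You should redo the decomposition so that the $M$ factor comes from the $y$-to-$g$ connection probability and the $\partial_h M$ factor from $\sum_x\bbP[0\lr x,\ 0\nlr g]$.

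Finally, a small point: if you do express $\partial_h M$ via Russo, the correct identity carries a factor $e^{-h}$, since $\partial_h(1-e^{-h})=e^{-h}$; this factor cancels nicely because $\bbP[\{z,g\}\text{ pivotal}]\cdot e^{-h}=\bbP[0\lr z,\ 0\nlr g]$, but you dropped it silently.
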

\begin{proof}
Consider a finite subset $\Lambda$ of $V$. Russo's formula leads to the following version of \eqref{eq:bateau}:
\begin{equation}\label{eq:bateau2}\frac{\partial \bbP_{\beta,h}[0\longleftrightarrow \Lambda^c\cup\{g\}]}{\partial\beta}=\sum_{\{x,y\}}J_{x,y}\,\bbP_{\beta,h}[\{x,y\}\text{ pivotal}].\end{equation}
The edge $\{x,y\}$ is pivotal if, without using $\{x,y\}$,  one of the two vertices is connected to $0$
but not to $\Lambda^c\cup\{g\}$, and the other one to
$\Lambda^c\cup\{g\}$. Without loss of generality, let us assume that $x$ is
connected to 0, and $y$ is not. Conditioning on the set
$$\calS=\big\{z\in\Lambda:z\longleftrightarrow \Lambda^c\cup\{g\}\text{ without using }\{x,y\}\big\},$$ we obtain
$$\bbP_{\beta,h}[\{x,y\}\text{ pivotal}]\le \bbP_{\beta,h}[y\leftrightarrow \Lambda^c\cup\{g\}] \cdot\bbP_{\beta,h}[0\longleftrightarrow x,0\not\longleftrightarrow \Lambda^c\cup\{g\}].$$
Plugging this inequality in \eqref{eq:bateau} and letting $\Lambda$ tend to $V$, we find
\begin{equation}\label{eq:bateau2}\frac{\partial M}{\partial\beta}\le \Big(\sum_{\{0,y\}}J_{0,y}\Big)\, M\,\Big(\sum_{x\in V}\bbP_{\beta,h}[0\longleftrightarrow x,0\not\longleftrightarrow g]\Big).\end{equation}
We conclude by observing that if $\sfC$ denotes the cluster of 0 in $V$, we find
\begin{align*}
\frac{\partial M}{\partial
    h}&=\frac{\partial }{\partial
    h}\Big(1-\sum_{n=0}^\infty \bbP_{\beta,h}[|\sfC|=n]e^{-nh}\Big)=\sum_{n=0}^\infty n\bbP_{\beta,h}[|\sfC|=n]e^{-nh}\\
    &=\sum_{n=0}^\infty \sum_{x\in V}\bbP_{\beta,h}[0\longleftrightarrow x,|\sfC|=n]e^{-nh}=\sum_{x\in V}\bbP_{\beta,h}[0\longleftrightarrow x,0\not\longleftrightarrow g].\end{align*}
\end{proof}
Another
differential inequality, which is harder to obtain, usually complements \eqref{eq:hu}:
\begin{equation}\label{eq:11}M\le h\frac{\partial M}{\partial h}+M^2+\beta M\frac{\partial M}{\partial \beta}.\end{equation}
This other inequality may be avoided using the following observation. The
differential inequality \eqref{eq:6} is satisfied with $\P[0\lr\Lambda^c]$
replaced by $\bbP_{\beta,h}[0\lr\{g\}\cup\Lambda^c]$, thus giving us for $\beta\ge\beta_c$ and $h\ge0$,
\begin{align*}\frac{\partial M}{\partial\beta}\ge\tfrac1{\beta}\,(1-M)\end{align*}
(at $\beta=\beta_c$, we use the fact that $\varphi_{\beta_c}(S)\ge1$ for every finite $S\ni 0$, see the comment before Proposition~\ref{prop:a}). When $\beta\ge\beta_c$ this implies that
\begin{equation}\label{eq:12}1-M~\le~ \beta\,\frac{\partial M}{\partial\beta}~\le~ \beta\,\big(\sum_{x\in V}J_{0,x}\big) \, M\frac{\partial M}{\partial h},\end{equation}
which immediately implies the following mean-field lower bound: there exists a
constant $c>0$ such that for any $h>0$,
$$\bbP_{\beta_c,h}[0\longleftrightarrow g]=M(\beta_c,h)~\ge~ c\sqrt h.$$

\begin{remark}
While \eqref{eq:12} is slightly shorter to obtain that \eqref{eq:11}, the later is very useful when trying to obtain an upper bound on $M(\beta_c,h)$. 
\end{remark}
\section{The Ising model}
\subsection{The main result}
For a finite subset $\Lambda$ of $V$, consider a spin configuration $\sigma=(\sigma_x:x\in \Lambda)\in\{-1,1\}^\Lambda$. For $\beta>0$ and $h\in\bbR$, introduce the Hamiltonian
\begin{align*}  
H_{\Lambda,\beta,h}(\sigma)~&:=~-~\beta \sum_{x,y\in\Lambda}J_{x,y}\sigma_x\sigma_y~-~ h \sum_{x\in \Lambda}  \sigma_x. 
\end{align*}
Define the Gibbs measures on $\Lambda$ with free boundary conditions, inverse-temperature $\beta$ and external field $h\in\bbR$ by the formula
$$\langle f\rangle_{\Lambda,\beta,h}=\frac{\displaystyle\sum_{\sigma\in\{-1,1\}^{\Lambda}}f(\sigma)\exp[-H_{\Lambda,\beta,h}(\sigma)]}{\displaystyle\sum_{\sigma\in\{-1,1\}^{\Lambda}}\exp[-\beta H_{\Lambda,\beta,h}(\sigma)]}$$
for $f:\{-1,1\}^{\Lambda}\longrightarrow \bbR$. 
Let the infinite-volume Gibbs measure $\langle \cdot\rangle_{\beta,h}$
be the weak limit of $\langle \cdot\rangle_{\Lambda,\beta,h}$ as
$\Lambda\nearrow V$. Also write $\langle \cdot\rangle_{\beta}^+$ for the weak limit of $\langle \cdot\rangle_{\beta,h}$ as $h\searrow 0$.

Introduce
$$\beta_c:=\inf\{\beta>0:\langle\sigma_0\rangle_\beta^+>0\}.$$
\begin{theorem}\label{thm:Ising}
\begin{enumerate}
\item\label{item:4} For $\beta>\beta_c$, $\langle\sigma_0\rangle_\beta^+\ge\sqrt{\frac{\beta^2-\beta_c^2}{\beta^2}}$.
\item\label{item:5} For $\beta<\beta_c$, the susceptibility is finite, i.e.
$$\sum_{x\in V}\langle\sigma_0\sigma_x\rangle_\beta^+<\infty.$$
\item\label{item:6} If $(J_{x,y})_{x,y\in V}$ is finite-range, then for any $\beta<\beta_c$, there exists $c=c(\beta)>0$ such that
$$\langle\sigma_0\sigma_x\rangle_\beta^+\le e^{-c \mathrm{d}(0,x)}\quad\text{ for all }x\in V.$$
\end{enumerate}
\end{theorem}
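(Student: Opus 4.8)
The plan is to mimic the Bernoulli–percolation argument, with the random‑current representation and Aizenman's switching lemma playing the role that independence and the BK inequality played there. Write $\langle\cdot\rangle_{S,\beta}:=\langle\cdot\rangle_{S,\beta,0}$ for the Ising measure on the finite graph induced by $S$, with free boundary conditions and no field, and for a finite $S\ni 0$ define the Ising analog of \eqref{eq:1},
\[
\varphi_\beta(S):=\sum_{x\in S}\sum_{y\notin S}\tanh(\beta J_{x,y})\,\langle\sigma_0\sigma_x\rangle_{S,\beta},
\]
together with $\tilde\beta_c:=\sup\{\beta\ge 0:\varphi_\beta(S)<1\text{ for some finite }S\subset V\text{ containing }0\}$. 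As in Section~\ref{sec:proof-item-2-3} I will prove that Items~\ref{item:4}, \ref{item:5} and~\ref{item:6} hold with $\tilde\beta_c$ in place of $\beta_c$; since Item~\ref{item:4} then gives $\langle\sigma_0\rangle_\beta^+>0$ for $\beta>\tilde\beta_c$ while Item~\ref{item:5} (finiteness of the free susceptibility) forces $\langle\sigma_0\rangle_\beta^+=0$ for $\beta<\tilde\beta_c$, this yields $\tilde\beta_c=\beta_c$ and hence the theorem.

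For Items~\ref{item:5} and~\ref{item:6} the needed input is the Lieb improvement of Simon's inequality: for finite $\Lambda$, $0\in S\subset\Lambda$ and $x\in\Lambda\setminus S$,
\[
\langle\sigma_0\sigma_x\rangle_{\Lambda,\beta}\ \le\ \sum_{x'\in S}\sum_{y'\notin S}\tanh(\beta J_{x',y'})\,\langle\sigma_0\sigma_{x'}\rangle_{S,\beta}\,\langle\sigma_{y'}\sigma_x\rangle_{\Lambda,\beta},
\]
the exact counterpart of Lemma~\ref{lem:finiteCriterion}. I would prove it from the random‑current representation by cutting a current with sources $\{0,x\}$ at the first edge leaving $S$ and applying the switching lemma, just as an open path was cut at the boundary of $S$ in the proof of Lemma~\ref{lem:finiteCriterion}. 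Granting it, the computations of Section~\ref{sec:proof-item-2-3} transfer verbatim: for $\beta<\tilde\beta_c$, fix a finite $S\ni 0$ with $\varphi_\beta(S)<1$; summing the inequality over translates $S_u$ of $S$ and using $\langle\sigma_u\sigma_v\rangle\le 1$ on $S_u$ gives $\sum_{v\in\Lambda}\langle\sigma_u\sigma_v\rangle_{\Lambda,\beta}\le|S|+\varphi_\beta(S)\,\chi(\Lambda,\beta)$, hence $\chi(\Lambda,\beta)\le|S|/(1-\varphi_\beta(S))$ uniformly in $\Lambda$, hence $\sum_x\langle\sigma_0\sigma_x\rangle_\beta^{\mathrm{f}}<\infty$ after $\Lambda\nearrow V$; and when the couplings are finite‑range, iterating the inequality yields $\langle\sigma_0\sigma_x\rangle_\beta^{\mathrm{f}}\le\varphi_\beta(S)^{\lfloor\mathrm{d}(0,x)/L\rfloor}$ for a suitable $L$. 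To obtain Items~\ref{item:5}--\ref{item:6} as stated one passes from the free state to $\langle\cdot\rangle_\beta^+$: this is classical, since finiteness of the free susceptibility implies the absence of an infinite cluster in the free FK--Ising model, hence (off a countable set of $\beta$, irrelevant by monotonicity) the absence of spontaneous magnetisation, so $\langle\cdot\rangle_\beta^{\mathrm{f}}=\langle\cdot\rangle_\beta^+$ throughout $\beta<\tilde\beta_c$.

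For Item~\ref{item:4} I would add a ghost vertex $g$ with $\beta J_{x,g}=h>0$ for all $x$, and set $M=M(\beta,h):=\langle\sigma_0\rangle_{\Lambda,\beta,h}=\langle\sigma_0\sigma_g\rangle_{\Lambda\cup\{g\},\beta}$. The switching lemma identifies $M^2=\mathbf P_\beta[0\longleftrightarrow g]$, where $\mathbf P_\beta$ is the law of the trace of a sum $\mathbf n_1+\mathbf n_2$ of two independent sourceless currents on $\Lambda\cup\{g\}$; the appearance of $M^2$ is exactly what produces the square root. The crux is the differential inequality
\[
\frac{d}{d\beta}\,M^2\ \ge\ \frac{2}{\beta}\Big(\inf_{\substack{S\subset\Lambda\\0\in S}}\varphi_\beta(S)\Big)\big(1-M^2\big),
\]
the Ising counterpart of Lemma~\ref{lem:meanField}. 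To get it one differentiates $\mathbf P_\beta[0\longleftrightarrow g]$ in $\beta$ — which by the switching lemma becomes a sum over edges of the probability that the edge closes a connection from $0$ to $g$ in the doubled current — and then conditions on the cluster $\calC$ of $0$ in $\mathbf n_1+\mathbf n_2$, the analog in this doubled‑current setting of the random set $\calS=\{x:x\nlr\Lambda^c\}$ of Lemma~\ref{lem:meanField}. On $\{0\nlr g\}$ one has $g\notin\calC$, and conditionally on $\{\calC=S\}$ the current restricted to $S$ is, again by the switching lemma, distributed so that the connection probabilities inside $S$ are the free two‑point functions $\langle\sigma_0\sigma_{x'}\rangle_{S,\beta}$; this is how $\varphi_\beta(S)$ is reconstituted, the factor $2$ coming from the two independent currents, each contributing to the $\beta$‑derivative. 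Once the inequality holds, using $\inf_S\varphi_\beta(S)\ge 1$ for $\beta>\tilde\beta_c$, integrating $\tfrac{(M^2)'}{1-M^2}\ge\tfrac2\beta$ from $\tilde\beta_c$ to $\beta$, and letting $\Lambda\nearrow V$ then $h\searrow 0$, yields $(\langle\sigma_0\rangle_\beta^+)^2\ge 1-\tilde\beta_c^2/\beta^2$.

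The main obstacle is precisely this differential inequality for Item~\ref{item:4}: carrying out, inside the random‑current representation, the ``condition on a random set, then factorise'' step that independence made transparent in Lemma~\ref{lem:meanField}, and in particular checking that the conditional current on the cluster of $0$ really rebuilds $\varphi_\beta$ with the free two‑point function and the correct constant $2$. A secondary, routine point — already flagged in the remark following Lemma~\ref{lem:meanField} — is that for infinite‑range couplings the differentiation and the switching lemma are used in infinite volume and must be justified by finite‑volume approximation.
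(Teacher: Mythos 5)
Your overall plan coincides with the paper's: same $\varphi_\beta(S)$, same $\tilde\beta_c$, and the same three-step structure (a differential inequality for Item~\ref{item:4}, a Simon/Lieb-type inequality for Items~\ref{item:5}--\ref{item:6}). However, there are two concrete gaps in the way you propose to carry it out.

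For Item~\ref{item:4}, the finite-volume differential inequality you write down,
$\frac{d}{d\beta}\langle\sigma_0\rangle_{\Lambda,\beta,h}^2\ge\frac2\beta\inf_{S\ni0}\varphi_\beta(S)\,(1-\langle\sigma_0\rangle_{\Lambda,\beta,h}^2)$,
is \emph{not} what the random-current computation actually produces at fixed $\Lambda$. When one ``unhooks'' the piece of the doubled current that links $y$ to the ghost $g$, the switching lemma forces a factor $\langle\sigma_y\rangle_{\Lambda,\beta,h}^{-1}$ (not $\langle\sigma_0\rangle^{-1}$), and the chain rule factor $\langle\sigma_0\rangle_{\Lambda,\beta,h}$ only partially cancels it. The honest finite-volume bound thus carries an extra multiplicative constant $c(\Lambda)=\min_{y\in\Lambda}\langle\sigma_0\rangle_{\Lambda,\beta,h}/\langle\sigma_y\rangle_{\Lambda,\beta,h}\le1$; one must then argue separately, using Griffiths' inequality and translation invariance along the exhaustion $\Lambda_n$, that $c(\Lambda_n)\to1$. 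Also, conditioning on ``the cluster $\calC$ of $0$'' does not factorise the weights cleanly; the correct random set to condition on is the complement of the cluster of $g$ in $\n_1+\n_2$ (and in fact the derivation passes through two conditionings, first on $\calS_0$ to produce $\langle\sigma_y\rangle$ and then on $\calS_g$ to reconstitute $\langle\sigma_0\sigma_x\rangle_{S,\beta,0}$). For Items~\ref{item:5}--\ref{item:6}, the phrase ``cutting a current with sources $\{0,x\}$ at the first edge leaving $S$'' is not meaningful as stated: a current is not a path, and there is no well-defined ``first edge leaving $S$''. The device that makes this rigorous is the \emph{backbone} representation (Aizenman--Fern\'andez: properties {\bf P1}--{\bf P3}), whose lexicographically minimal self-avoiding walk plays exactly the role you want an ``open path'' to play, and with which the analogue of the BK step becomes the factorisation {\bf P2}/{\bf P3}. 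Without backbones the cut-and-factorise step would have to be re-invented via switching, which is precisely why the classical Simon/Lieb inequalities are not quite in the form $\sum_{x\in S}\sum_{y\notin S}\tanh(\beta J_{x,y})\cdots$ and ``do not suffice for the application''. Finally, your detour through the free state and FK--Ising (to upgrade $\langle\cdot\rangle^{\mathrm f}$ to $\langle\cdot\rangle^+$) is unnecessary and brings in auxiliary theory: the modified Simon inequality can be proved directly for $\langle\cdot\rangle_\beta^+$ by running the backbone argument with a ghost field $h>0$ and letting $\Lambda\nearrow V$ and then $h\searrow0$.
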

This theorem was first proved in \cite{AizBarFer87} for the Ising model on
the $d$-dimensional hypercubic lattice. The proof presented here
improves the constant in the mean-field lower bound, and extends to
general transitive graphs.\bigbreak
The proof of Theorem~\ref{thm:Ising} follows
closely the proof for percolation.  
For $\beta>0$ and a finite subset $S$ of $V$, define
\begin{equation}
\varphi_S(\beta):=\sum_{x\in S}\sum_{y\in V\setminus S}\tanh(\beta J_{x,y})\langle \sigma_0\sigma_x\rangle_{S,\beta,0},
\end{equation}
which bears a resemblance to \eqref{eq:1}. Similarly to \eqref{eq:2}, set 
$$\tilde\beta_c:=\sup\{\beta\ge0:\varphi_\beta(S)<1\text{ for some
  finite }S\subset V\text{ containing 0}\}.$$

In order to prove Theorem~\ref{thm:Ising}, we show that
Items~\ref{item:4}, \ref{item:5} and~\ref{item:6} hold with
$\tilde{\beta_c}$ in place of $\beta_c$. This directly implies that
$\tilde{\beta_c}=\beta_c$, and thus Theorem~\ref{thm:Ising}. The proof
of Theorem~\ref{thm:Ising} proceeds in two steps.

As for percolation, the quantity $\varphi_\beta(S)$ appears naturally
in the derivative of a ``finite-volume  approximation'' of
$\langle\sigma_0\rangle_{\beta,h}$. Roughly speaking (see Lemma~\ref{lem:ising:meanField} for a
precise statement), one obtains a finite-volume version of the
following inequality:
  $$\frac d{d\beta}\langle\sigma_0\rangle_{\beta,h}^2\ge \tfrac2{\beta}\, \inf_{S\ni 0}\varphi_\beta(S)\cdot \big(1-\langle\sigma_0\rangle_{\beta,h}^2\big).$$
  This inequality implies, for every $\beta>\tilde{\beta_c}$,
  \begin{equation}
    \label{eq:13}
    \langle\sigma_0\rangle_{\beta,h}
    \ge\textstyle\sqrt{\frac{\beta^2-\tilde{\beta_c^2}}{\beta^2}}
  \end{equation}
  and therefore Item \ref{item:4} by letting $h$ tend to 0.
  
  The remaining items follow from an improved Simon's inequality,
  proved below.
 
 \begin{remark}The proof uses the random-current representation. In this context, the derivative of $\langle\sigma_0\rangle_{\beta,h}^2$ has an interpretation which is very close to the differential inequality \eqref{eq:6}. In some sense, percolation is replaced by the trace of the sum of two independent random sourceless currents. Furthermore, the strong Simon's inequality plays the role of the BK inequality
for percolation. \end{remark}

\subsection{Comments and consequences}

\begin{enumerate}
\item The random-cluster model (also called Fortuin-Kasteleyn percolation) with cluster weight $q=2$ is naturally coupled to the Ising model (see \cite{Gri06} for details). The previous theorem implies exponential decay in the subcritical phase for this model. 

\item Exactly like in the case of Bernoulli percolation, the critical parameter of the random-cluster model on the square lattice with $q=2$ can be proved to be equal to $\sqrt 2/(1+\sqrt 2)$ using the exponential decay in the subcritical phase together with the self-duality.

\item The previous item together with the coupling with the Ising model implies that $\beta_c=\frac12\log(1+\sqrt2)$ on the square lattice (see \cite{Ons44,BefDum12a} for alternative proofs).

\item Exactly as for Bernoulli percolation, we get that $\varphi_{\beta_c}(S)\ge1$ for any finite set $S\ni0$, which implies the following classical proposition. 
\begin{proposition}[\cite{Sim80}]\label{prop:b}
We have
$\displaystyle \sum_{x\in V}\langle\sigma_0\sigma_x\rangle_{\beta_c}^+=\infty.$
\end{proposition}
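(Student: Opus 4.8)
The plan is to mimic the proof of Proposition~\ref{prop:a}. By the discussion preceding the statement, $\beta_c=\tilde\beta_c$ and, under the standing assumption $\sum_{y}J_{0,y}<\infty$, the set defining $\tilde\beta_c$ is open; hence $\varphi_{\beta_c}(S)\ge 1$ for every finite $S\ni 0$. Specializing to $S=\Lambda_n$ gives $\varphi_{\beta_c}(\Lambda_n)\ge 1$ for every $n\ge 1$, and the whole point is to show that this is incompatible with a finite susceptibility $\chi:=\sum_{x\in V}\langle\sigma_0\sigma_x\rangle_{\beta_c}^+$.

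The key step will be to bound $\varphi_{\beta_c}(\Lambda_n)$ from above in terms of $\chi$. Recall that $\varphi_{\beta_c}(\Lambda_n)=\sum_{x\in\Lambda_n}\sum_{y\notin\Lambda_n}\tanh(\beta_c J_{x,y})\,\langle\sigma_0\sigma_x\rangle_{\Lambda_n,\beta_c,0}$. The Ising substitute for the trivial monotonicity $\bbP_{\beta_c}[0\lr[\Lambda_n]x]\le\bbP_{\beta_c}[0\lr x]$ used for percolation is Griffiths' second inequality: the finite-volume free two-point function $\langle\sigma_0\sigma_x\rangle_{\Lambda_n,\beta_c,0}$ is nondecreasing in $\Lambda_n$ and dominated by the infinite-volume free state, which in turn is dominated by the $+$ state by GKS; so $\langle\sigma_0\sigma_x\rangle_{\Lambda_n,\beta_c,0}\le\langle\sigma_0\sigma_x\rangle_{\beta_c}^+$. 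Combining this with the triangle inequality and transitivity — for $x$ with $\mathrm{d}(0,x)=k$, every $y\notin\Lambda_n$ satisfies $\mathrm{d}(x,y)>n-k$, so $\sum_{y\notin\Lambda_n}\tanh(\beta_c J_{x,y})\le h(n-k)$ with $h(m):=\sum_{z:\mathrm{d}(0,z)>m}\tanh(\beta_c J_{0,z})$ — one gets
$$1\ \le\ \varphi_{\beta_c}(\Lambda_n)\ \le\ \sum_{k=0}^{n}a_k\,h(n-k),\qquad a_k:=\sum_{x:\mathrm{d}(0,x)=k}\langle\sigma_0\sigma_x\rangle_{\beta_c}^+ .$$

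To conclude I would argue by contradiction: assume $\chi=\sum_{k\ge 0}a_k<\infty$. Since $\sum_{y}J_{0,y}<\infty$ and $\tanh t\le t$, the sequence $h$ is bounded and $h(m)\to 0$; splitting the convolution at $k=\lfloor n/2\rfloor$, the low-$k$ part is at most $h(\lfloor n/2\rfloor)\,\chi$ and the high-$k$ part is at most $h(0)\sum_{k>n/2}a_k$, both of which tend to $0$. Hence $\varphi_{\beta_c}(\Lambda_n)\to 0$, contradicting $\varphi_{\beta_c}(\Lambda_n)\ge 1$; therefore $\chi=\infty$, as claimed. (Even closer to the two-line proof of Proposition~\ref{prop:a}, one may simply sum the displayed bound over $n$ to obtain $\infty=\sum_n\varphi_{\beta_c}(\Lambda_n)\le\big(\sum_z\mathrm{d}(0,z)\tanh(\beta_c J_{0,z})\big)\chi$; this already forces $\chi=\infty$ whenever the couplings have a finite first moment, in particular for finite-range interactions, and the convolution argument only serves to remove that extra assumption.)

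I expect the only genuinely non-routine point to be the comparison $\langle\sigma_0\sigma_x\rangle_{\Lambda_n,\beta_c,0}\le\langle\sigma_0\sigma_x\rangle_{\beta_c}^+$ via the Griffiths/GKS inequalities — the place where "passing to a subgraph and to the free state can only decrease correlations" is invoked, exactly as the analogous but trivial monotonicity of connection probabilities is used for percolation. The remaining ingredients (triangle inequality, transitivity, and the elementary fact that the convolution of a summable sequence with a null sequence is null) are routine; the usual caveat about infinite-range models — that $\varphi$ and these comparisons may involve infinitely many terms — is handled by finite-volume approximation, as the authors note elsewhere.
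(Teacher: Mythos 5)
Your proof is correct and follows the paper's own strategy: from $\beta_c=\tilde\beta_c$ and the openness of the set defining $\tilde\beta_c$ one gets $\varphi_{\beta_c}(\Lambda_n)\ge 1$ for all $n$, and then Griffiths/GKS monotonicity $\langle\sigma_0\sigma_x\rangle_{\Lambda_n,\beta_c,0}\le\langle\sigma_0\sigma_x\rangle_{\beta_c,0}\le\langle\sigma_0\sigma_x\rangle_{\beta_c}^+$ is used to play this lower bound against the susceptibility $\chi$. Where you diverge is the last step. The paper sums the inequality over $n$ and writes
\[
\Big(\sum_{y\in V}\tanh(\beta_c J_{0,y})\Big)\cdot\chi\ \ge\ \sum_{n\ge1}\varphi_{\beta_c}(\Lambda_n)=\infty,
\]
which forces $\chi=\infty$ in one line. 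You instead show $\varphi_{\beta_c}(\Lambda_n)\to 0$ directly whenever $\chi<\infty$, via the triangle inequality and the convolution bound $\varphi_{\beta_c}(\Lambda_n)\le\sum_{k\le n}a_k\,h(n-k)$ split at $k=\lfloor n/2\rfloor$. Your version is actually the more careful one: when the order of summation in $\sum_n\varphi_{\beta_c}(\Lambda_n)$ is exchanged, a pair $(x,y)$ is counted once for every $n$ with $\mathrm{d}(0,x)\le n<\mathrm{d}(0,y)$, so the coefficient that naturally emerges is $\sum_z\mathrm{d}(0,z)\tanh(\beta_c J_{0,z})$, not $\sum_z\tanh(\beta_c J_{0,z})$ — the paper's display as written requires a first-moment condition on the couplings (automatic for finite range). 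Your remark at the end of the proposal makes exactly this point, and your convolution argument removes the extra assumption, needing only the paper's standing hypothesis $\sum_yJ_{0,y}<\infty$. So: same key ingredients (Griffiths monotonicity, transitivity, triangle inequality), but your finishing step is both cleaner and valid for all infinite-range couplings the paper considers.
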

\begin{proof}
Use Griffiths' inequality \eqref{eq:14} below to show that for $x\in \Lambda_n\setminus \Lambda_{n-1}$,
$$\langle\sigma_0\sigma_x\rangle_{\beta_c}^+\ge \langle\sigma_0\sigma_x\rangle_{\beta_c,0}\ge \langle\sigma_0\sigma_x\rangle_{\Lambda_n,\beta_c,0}$$ so that
$$\Big(\sum_{y\in V}\tanh(\beta J_{0,y})\Big)\cdot\sum_{x\in V}\langle\sigma_0\sigma_x\rangle_{\beta_c}^+\ge \sum_{n\ge1}\varphi_{\beta_c}(\Lambda_n)=\infty.$$\end{proof}
\item The equality $\beta_c=\tilde\beta_c$ implies that $\beta_c$ is lower semi-continuous with respect to the graph (see the discussion for Bernoulli percolation).

\item In \cite{AizBarFer87}, the authors also prove the following result.
  
  \begin{proposition}[\cite{AizBarFer87}]\label{prop:field2}
  There exists a constant $c>0$ such that for any $h>0$, 
  $$\langle\sigma_0\rangle_{\beta_c,h}\ge ch^{1/3}.$$
  \end{proposition}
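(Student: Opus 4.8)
The plan is to mirror exactly the structure used in the proof of Proposition~\ref{prop:magnetic field}, but now working with the magnetization derivative in $\beta$ and in $h$ for the Ising model rather than for percolation. Write $M(\beta,h)=\langle\sigma_0\rangle_{\beta,h}$. The first ingredient I would establish is an Ising analogue of \eqref{eq:hu}, namely a differential inequality of the form
\begin{equation*}
\frac{\partial M}{\partial\beta}\le C\, M\,\frac{\partial M}{\partial h}
\end{equation*}
for an explicit constant $C$ depending only on $\sum_y J_{0,y}$. This should come out of Russo-type (in fact Lebowitz/Griffiths-type) computation in the random-current representation: differentiating $M$ in $\beta$ produces a sum over edges $\{x,y\}$ of terms $J_{x,y}$ times a current-based "pivotality'' quantity, which one bounds by the switching lemma and the fact that $\partial M/\partial h=\sum_{x\in V}\big(\langle\sigma_0\sigma_x\rangle_{\beta,h}-\langle\sigma_0\rangle_{\beta,h}\langle\sigma_x\rangle_{\beta,h}\big)$, the truncated susceptibility. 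This is the step most likely to contain real work, since one needs the right current representation of $\partial_\beta M$ and a clean application of the switching lemma; it is the Ising counterpart of the cluster-conditioning argument in the percolation proof.

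Next I would invoke the already-available mean-field bound in $\beta$. The excerpt states that the differential inequality
\begin{equation*}
\frac{d}{d\beta}\langle\sigma_0\rangle_{\beta,h}^2\ge \tfrac{2}{\beta}\,\inf_{S\ni 0}\varphi_\beta(S)\cdot\big(1-\langle\sigma_0\rangle_{\beta,h}^2\big)
\end{equation*}
holds (in finite volume, then pass to the limit), and that $\varphi_{\beta_c}(S)\ge1$ for every finite $S\ni 0$. At $\beta=\beta_c$ this yields
\begin{equation*}
1-M^2\le \beta_c\,\frac{d}{d\beta}\big(M^2\big)\Big|_{\beta=\beta_c}=2\beta_c\,M\,\frac{\partial M}{\partial\beta}\Big|_{\beta=\beta_c},
\end{equation*}
so combining with the first inequality gives, at $\beta=\beta_c$,
\begin{equation*}
1-M^2\le 2\beta_c C\, M^2\,\frac{\partial M}{\partial h}.
\end{equation*}
Since $M\le 1$ this simplifies to $1-M\lesssim M^2\,\partial_h M\le \partial_h M$ for $h$ small, i.e. a differential inequality $1-M\le c_1\,\partial_h M$ whenever $M$ is bounded away from $0$; combined with $1-M^2=(1-M)(1+M)$ one gets $(1-M)^2\lesssim \partial_h M$ near $h=0$ where $M\to 0$.

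Finally I integrate. Set $g(h)=M(\beta_c,h)$; near $h=0^+$ we have $g(0)=0$ (this is the definition of $\beta_c$ together with left-continuity of $\langle\sigma_0\rangle^+$, or rather $\le$), $g$ is increasing, and the inequality reads $(1-g)^2\le c_1 g'$, hence for small $h$, $g'\ge c_2(1-g)^2\ge c_2/4$... that's too strong, so more care is needed: the correct bound retains the factor $M^2$, giving $g'\ge c_3\,(1-g)/g^2\cdot$ — one must keep $M^2\partial_h M$ on the large side, i.e. $1-g\le c\,g^2 g'=\tfrac{c}{3}(g^3)'$, so $(g^3)'\ge c'(1-g)\ge c'/2$ for $g\le 1/2$, which integrates to $g(h)^3\ge c''h$, that is $M(\beta_c,h)\ge c\,h^{1/3}$. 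The only subtlety here is justifying $g(0^+)=0$ and that all the finite-volume-to-infinite-volume and $h\searrow0$ limits in the two differential inequalities can be taken legitimately; both are standard (monotonicity and Griffiths' inequalities), so I expect the genuine obstacle to remain the random-current proof of $\partial_\beta M\le C\,M\,\partial_h M$, after which the exponent $1/3$ drops out of elementary ODE manipulation exactly as the exponent $1/2$ did in Proposition~\ref{prop:magnetic field}.
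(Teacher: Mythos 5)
Your overall strategy is exactly the paper's: combine the mean-field inequality from Lemma~\ref{lem:ising:meanField} at $\beta=\beta_c$ (using $\varphi_{\beta_c}(S)\ge1$) with the Aizenman--Barsky--Fern\'andez differential inequality $\partial_\beta M\le\bigl(\sum_y J_{0,y}\bigr)M\,\partial_h M$, then rewrite $M^2\partial_h M=\tfrac13\partial_h(M^3)$ and integrate in $h$ to get $M^3\gtrsim h$. Your mid-paragraph detour claiming ``$(1-M)^2\lesssim\partial_h M$'' is not a consequence of $1-M^2\le c\,M^2\partial_h M$ and should be deleted, but you immediately self-correct and perform the integration the right way, so this is only a presentational wobble, not a gap.

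The one place you genuinely diverge from the paper is the proof of $\partial_\beta M\le\bigl(\sum_y J_{0,y}\bigr)M\,\partial_h M$. You propose to derive it by a random-current computation (writing $\partial_\beta M$ as a sum over edges of truncated three-point functions and then using the switching lemma), flagging this as the step ``most likely to contain real work.'' The paper instead gets this inequality in one line from the Griffiths--Hurst--Sherman concavity inequality: starting from $\partial_\beta M=\sum_{\{x,y\}}J_{x,y}\bigl(\langle\sigma_0\sigma_x\sigma_y\rangle-\langle\sigma_0\rangle\langle\sigma_x\sigma_y\rangle\bigr)$, GHS bounds each summand by $\langle\sigma_y\rangle\bigl(\langle\sigma_0\sigma_x\rangle-\langle\sigma_0\rangle\langle\sigma_x\rangle\bigr)+\langle\sigma_x\rangle\bigl(\langle\sigma_0\sigma_y\rangle-\langle\sigma_0\rangle\langle\sigma_y\rangle\bigr)$, and summing in $y$ (resp.\ $x$) produces the factor $M$ and the truncated susceptibility $\partial_h M$. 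So the GHS route is substantially simpler than what you anticipate; a switching-lemma derivation can be made to work (GHS itself has a random-current proof) but is overkill here. Everything else --- the identification of the two inequalities, the observation that $\varphi_{\beta_c}(S)\ge 1$, and the $h$-integration yielding the exponent $1/3$ --- matches the paper.
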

 We present in Section~\ref{sec:901} a short proof of this proposition, using
 the same strategy as in our proof of Proposition~\ref{prop:magnetic field}.

\end{enumerate}

\subsection{Preliminaries}
\label{sec:rand-curr-repr}
\paragraph{Griffiths' inequality.}
The following is a standard consequence of the second Griffiths' inequality
\cite{Gri67}: for $\beta>0$, $h\ge0$ and $S\subset\Lambda$ two finite subsets of
$V$, 
\begin{equation}
  \label{eq:14}
  \langle \sigma_0 \rangle_{S,\beta,h} \leq \langle \sigma_0\rangle_{\Lambda,\beta,h}.
\end{equation}

\paragraph{Random-current representation.}
This section presents a few basic facts on the random-current representation. We
refer to \cite{Aiz82,AizFer86,AizBarFer87} for details on this representation.

Let $\Lambda$ be a finite subset of $V$ and $S\subset\Lambda$. We consider an
additional vertex $g$ not in $V$, called the \emph{ghost vertex}, and
write $\calP_2(S\cup \{g\})$ for the set of pairs
$\{x,y\}$, $x,y\in S\cup \{g\}$. We also define $J_{x,g}=h/\beta$ for every $x\in
\Lambda$.
\begin{definition}
  A {\em current} $\n$ on $S$ (also called a current configuration) is a
  function from $\calP_2(S\cup \{g\})$ to $\{0,1,2,...\}$. A {\em source} of
  $\n=(\n_{x,y}:\{x,y\}\in \calP_2(S\cup \{g\}))$ is a vertex $x\in  S\cup\{g\}$ for which
  $\sum_{y\in S}{\n}_{x,y}$ is odd. The set of sources of $\n$ is denoted by
  $\partial\n$. We say that $x$ and $y$ are connected in $\n$ (denoted by
  $x\lr[\n]y$) if there exists a sequence of vertices $v_0,v_1,\ldots, v_K$ in
  $S\cup \{g\}$ such that $v_0=x$, $v_K=y$ and $\n_{v_k,v_{k+1}}>0$ for every
  $0\le k<K$.
\end{definition}
For a finite subset $\Lambda$ of $V$ and a current $\n$ on $\Lambda$, define
$$w(\n)=w(\n,\beta,h):=\prod_{\{x,y\}\in\calP_2(\Lambda\cup\{g\})}\frac{(\beta
  J_{x,y})^{\n_{x,y}}}{\n_{x,y}!}.$$ 
  From now on, we will write $\sum_{\partial \n=A}$ for the sum running on currents on $S$ with sources $A$. Sometimes, the current $\n$ will be on $S'\subset S$ (and therefore the sum will run on such currents), but this will be clear from context.
  \bigbreak
  An important property of random currents is the following:
for  every subset $A$ of $\Lambda$, we have
\begin{align}
  \label{eq:15}
  \big\langle \prod_{a\in
    A}\sigma_a\big\rangle_{\Lambda,\beta,h}&=\begin{cases}
   \displaystyle   \frac{\sum_{\partial \n=A} w(\n)}{\sum_{\partial\n=\emptyset}w(\n)}&\text{if
      $A$ is even},\\
      \\
   \displaystyle  \frac{\sum_{\partial \n=A\cup\{g\}} w(\n)}{\sum_{\partial\n=\emptyset}w(\n)}&\text{if
      $A$ is odd}.\end{cases}
\end{align}

We will use the following standard lemma on random currents.
\begin{lemma}[Switching Lemma, {\cite[Lemma 3.2]{Aiz82}}]
  Let $A \subset \Lambda$ and $u,v\in \Lambda\cup\{g\}$. Let $F$ be a function from the set of currents on $\Lambda$ to $\bbR$. We have
  \begin{equation}
    \label{eq:16}
    \sum_{\substack{\partial \n_1= A \Delta \{u,v\}\\ \partial \n_2=
        \{u,v\}}}F(\n_1+\n_2)w(\n_1) w(\n_2)=\sum_{\substack{\partial \n_1=
        A\\ \partial \n_2=\emptyset}}F(\n_1+\n_2)w(\n_1) w(\n_2) \mathbf{I}[u\lr[\n_1+\n_2] v],
  \end{equation}
  where $\Delta$ is the symmetric difference between sets.
\end{lemma}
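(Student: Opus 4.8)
The plan is to reduce \eqref{eq:16} to a purely combinatorial identity about a \emph{single} fixed current, and then prove that identity by an involution on subgraphs of the associated multigraph. The first ingredient is pure bookkeeping: for currents $\n_1\le\n$ set $\binom{\n}{\n_1}:=\prod_{\{x,y\}}\binom{\n_{x,y}}{(\n_1)_{x,y}}$; applying $\tfrac1{a!\,b!}=\tfrac1{(a+b)!}\binom{a+b}{a}$ edge by edge gives $w(\n_1)w(\n_2)=w(\n_1+\n_2)\binom{\n_1+\n_2}{\n_1}$. I then reindex each side of \eqref{eq:16} by $\n:=\n_1+\n_2$ together with $\n_1\le\n$. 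Since the source set of a sum of two currents is the symmetric difference of their source sets, the constraint $\partial\n_2=\{u,v\}$ (with $\partial\n_1=A\,\Delta\,\{u,v\}$) on the left forces $\partial\n=A$, and the constraint $\partial\n_2=\emptyset$ (with $\partial\n_1=A$) on the right also forces $\partial\n=A$; moreover $\mathbf{I}[u\lr[\n_1+\n_2]v]$ depends only on $\n$. Hence both sides are sums over currents $\n$ with $\partial\n=A$, and it suffices to prove, for each such $\n$,
$$\sum_{\substack{\n_1\le\n\\ \partial\n_1=A\,\Delta\,\{u,v\}}}\binom{\n}{\n_1}\;=\;\mathbf{I}[u\lr[\n]v]\sum_{\substack{\n_1\le\n\\ \partial\n_1=A}}\binom{\n}{\n_1}.$$

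Next I would translate this into graph language. Let $M$ be the multigraph on $\Lambda\cup\{g\}$ in which $\{x,y\}$ has multiplicity $\n_{x,y}$. Then $\binom{\n}{\n_1}$ counts the sub-edge-multisets $K$ of $M$ with multiplicity profile $\n_1$, so $\sum_{\n_1\le\n,\,\partial\n_1=B}\binom{\n}{\n_1}$ is exactly the number of $K\subseteq M$ whose set of odd-degree vertices equals $B$. Since $\partial M=A$, the displayed identity asserts: the number of $K\subseteq M$ with odd-set $A\,\Delta\,\{u,v\}$ equals $\mathbf{I}[u\lr[\n]v]$ times the number with odd-set $A$.

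To prove this I would argue by cases. If $u$ and $v$ are connected in $\n$, fix a path $P$ from $u$ to $v$ in $M$ using distinct edge-copies and no repeated vertex. The map $K\mapsto K\,\Delta\,P$ is an involution on sub-edge-multisets of $M$; since every internal vertex of $P$ meets exactly two edges of $P$ while $u$ and $v$ meet exactly one, this map flips the degree parity precisely at $u$ and $v$, hence bijects $\{K\subseteq M:\text{odd-set}=A\}$ with $\{K\subseteq M:\text{odd-set}=A\,\Delta\,\{u,v\}\}$, and the identity follows. If instead $u$ and $v$ are not connected in $\n$, I claim no $K\subseteq M$ has odd-set $A\,\Delta\,\{u,v\}$: for such a $K$, the complement $M\setminus K$ would have odd-set $\partial M\,\Delta\,(A\,\Delta\,\{u,v\})=\{u,v\}$, so $u$ and $v$ would be odd-degree vertices of $M\setminus K$; applying the handshake lemma to the connected component of $u$ in $M\setminus K$ shows that component contains a second odd vertex, necessarily $v$, so $u$ and $v$ are connected in $M\setminus K$, hence in $M$, a contradiction. (If $u=v$ there is nothing to prove, since then $\{u,v\}=\emptyset$.) Assembling these steps yields \eqref{eq:16}.

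The only genuine obstacle is the case analysis of the last paragraph: choosing the connecting path $P$ and checking that symmetric difference with it shifts the source set by exactly $\{u,v\}$, together with the complementary-subgraph argument that rules out the disconnected case. One must also make sure the parity bookkeeping remains clean when $P$ passes through the ghost vertex $g$, but this needs no modification since $g$ is treated as an ordinary vertex of $M$. Everything else — the weight identity and the reindexing of the sums — is routine algebra.
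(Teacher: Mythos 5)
The paper does not prove this lemma; it cites it as \cite[Lemma 3.2]{Aiz82} and uses it as a black box. Your proof is correct and is, in substance, the classical Aizenman argument: reduce via the identity $w(\n_1)w(\n_2)=w(\n_1+\n_2)\binom{\n_1+\n_2}{\n_1}$ and the change of variables $\n=\n_1+\n_2$ to the purely combinatorial statement that, in the multigraph $M$ with multiplicities $\n$ and $\partial M=A$, the number of sub-multigraphs with odd-degree set $A\Delta\{u,v\}$ equals $\mathbf I[u\lr[\n]v]$ times the number with odd-degree set $A$; then handle the connected case by the involution $K\mapsto K\Delta P$ along a fixed simple $u$--$v$ path $P$ in $M$, and the disconnected case by the handshake-lemma obstruction applied to $M\setminus K$. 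Your treatment of the ghost vertex (an ordinary vertex of $M$) and the degenerate case $u=v$ (interpreting $\{u,v\}$ as $\{u\}\Delta\{v\}=\emptyset$) is also the standard convention and is consistent with the usages of the lemma in the paper.
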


\paragraph{Backbone representation for random currents.} Fix a finite subset $\Lambda$ of $V$. Choose an arbitrary order of the oriented edges of the lattice. Consider a current $\n$ on $\Lambda$ with $\partial\n=\{x,y\}$. Let $\omega(\n)$ be the edge self-avoiding path from $x$ to $y$ passing only through edges $e$ with $\n_e$ odd which is minimal for the lexicographical order on paths induced by the previous ordering on oriented edges. Such an object is called the {\em backbone} of the current configuration. For the backbone $\omega$ with endpoints $\partial\omega=\{x,y\}$, set
$$\rho_\Lambda(\omega)=\rho_\Lambda(\beta,h,\omega):=\frac{\sum_{\partial\n=\{x,y\}}w(\n)\mathbf{I}[\omega(\n)=\omega]}{\sum_{\partial\n=\emptyset}w(\n)}.$$
The backbone representation has the following properties (see (4.2), (4.7) and (4.11) of \cite{AizFer86} for {\bf P1}, {\bf P2} and {\bf P3} respectively):
\begin{itemize}[noitemsep,nolistsep]
\item[{\bf P1}] $\langle \sigma_x\sigma_y\rangle_{\Lambda,\beta,h}=\sum_{\partial\omega=\{x,y\}}\rho_\Lambda(\omega)$.
\item[{\bf P2}] If the backbone $\omega$ is the concatenation of two backbones $\omega_1$ and $\omega_2$ (this is denoted by $\omega=\omega_1\circ\omega_2$), then
$$\rho_\Lambda(\omega)= \rho_\Lambda(\omega_1)\rho_{\Lambda\setminus\overline\omega_1}(\omega_2),$$
where $\overline\omega_1$ is the set of bonds whose state is determined by the fact that $\omega_1$ is an admissible backbone (this includes bonds of $\omega_1$ together with some neighboring bonds).
\item[{\bf P3}] For the backbone $\omega$ not using any edge outside $T\subset
  \Lambda$, we have $$\rho_\Lambda(\omega)\le \rho_T(\omega).$$
\end{itemize}

\subsection{Proof of Item~\ref{item:4}}
In this section, we prove that for every $\beta\ge \tilde \beta_c$,
\begin{equation}\label{eq:17}
 \langle\sigma_0\rangle_\beta^+
    \ge\textstyle\sqrt{\frac{\beta^2-\tilde{\beta_c^2}}{\beta^2}}.
\end{equation}
In order to do so, we will based our analysis on the following lemma.
\begin{lemma}\label{lem:ising:meanField}
Let $\beta>0$, $h>0$ and $\Lambda$ a finite subset of $V$. Then,
 \begin{equation}
   \label{eq:18}
   \frac d{d\beta} \langle\sigma_0\rangle_{\Lambda,\beta,h}^2 \ge 2c(\Lambda,\beta,h)\,\Big[\tfrac1\beta\inf_{S\ni 0}\varphi_\beta(S)(1-\langle\sigma_0\rangle_{\Lambda,\beta,h}^2)-\epsilon(\Lambda,\beta,h)\Big],\end{equation}
   where $$ c(\Lambda,\beta,h):=\inf_{y\in\Lambda}\frac{\langle\sigma_0\rangle_{\Lambda,\beta,h}}{\langle\sigma_y\rangle_{\Lambda,\beta,h}}$$ and
   \begin{align*}
   \epsilon(\Lambda,\beta,h)&:=\sum_{x\in \Lambda}\sum_{y\in V\setminus \Lambda}J_{x,y}(\langle\sigma_0\sigma_x\rangle_{\Lambda,\beta,h}-\langle\sigma_0\rangle_{\Lambda,\beta,h}\langle\sigma_x\rangle_{\Lambda,\beta,h}).
   \end{align*}
  \end{lemma}
To conclude the proof, fix $\beta_1,\beta_2>\tilde\beta_c$.  Integrating \eqref{eq:18} between $\beta_1$ and $\beta_2$ for $\Lambda$ equal to the box $\Lambda_n$ of size $n$, and then letting $\Lambda_n$ go to infinity, implies that 
  \begin{equation}\label{eq:19}
\langle\sigma_0\rangle_{\beta_2,h}^2-\langle\sigma_0\rangle_{\beta_1,h}^2\ge\int_{\beta_1}^{\beta_2}\tfrac{2}{\beta}(1-\langle\sigma_0\rangle_{\beta,h}^2)d\beta,\end{equation}
where the inequality above follows from Fatou's lemma together with 
\begin{align*}
&\lim_{n\rightarrow\infty}\langle\sigma_0\rangle_{\Lambda_n,\beta_i,h}=\langle\sigma_0\rangle_{\beta_i,h}&\text{(by weak convergence),}\\
&\lim_{n\rightarrow\infty}c(\Lambda_n,\beta,h)=1&\text{(see Remark~\ref{rmk:1} below),}\\
&\lim_{n\rightarrow\infty}\epsilon(\Lambda_n,\beta,h)=0&\text{(see Remark~\ref{rmk:2} below).}\end{align*}
 The proof of \eqref{eq:17} follows easily by letting $h$ tend to 0.
  
\begin{remark}\label{rmk:1}
  To see that $c(\Lambda_n,\beta,h)$ tends to 1, observe that  Griffiths' inequality \eqref{eq:14} implies
  that $\langle\sigma_y\rangle_{\Lambda_n,\beta,h}\le\langle\sigma_0\rangle_{\Lambda_{2n},\beta,h}$
  (we use the invariance under translation and the fact that the translate of $\Lambda_{2n}$ centered at $y$ contains $\Lambda_{n}$). Therefore, for every $n\ge1$,
  we have
  \begin{equation}
    \frac{\langle\sigma_0\rangle_{\Lambda_{n},\beta,h}}{\langle\sigma_0\rangle_{\Lambda_{2n},\beta,h}}
    \le c(\Lambda_n,\beta,h)\le1.\label{eq:23}
  \end{equation}
  Together with the fact that $\langle\sigma_0\rangle_{\Lambda_n,\beta,h}$ tends
  to $\langle\sigma_0\rangle_{\beta,h}$ as $n$ tends to infinity,
   \eqref{eq:23} implies that $c(\Lambda_n,\beta,h)$ tends to 1. 
   \end{remark}
   \begin{remark}\label{rmk:2} To see that $\epsilon(\Lambda_n,\beta,h)$ tends to 0, first observe that the GHS inequality \cite{GriHurShe70} implies that $\langle\sigma_0\rangle_{\Lambda,\beta,h}$ is a concave function of $h$. We deduce that 
   $$\sum_{x\in \Lambda}\langle\sigma_0\sigma_x\rangle_{\Lambda,\beta,h}-\langle\sigma_0\rangle_{\Lambda,\beta,h}\langle\sigma_x\rangle_{\Lambda,\beta,h}=\frac{\partial}{\partial h}\langle\sigma_0\rangle_{\Lambda,\beta,h}\le \frac{\langle\sigma_0\rangle_{\Lambda,\beta,h}}{h}\le \frac1h.
$$
Applied to $\Lambda=\Lambda_n$, this gives in particular that for each $k$,
$$\sum_{x\in \Lambda_{n-k}}\sum_{y\in V\setminus \Lambda_n}J_{x,y}\langle\sigma_0\sigma_x\rangle_{\Lambda_n,\beta,h}-\langle\sigma_0\rangle_{\Lambda_n,\beta,h}\langle\sigma_x\rangle_{\Lambda_n,\beta,h}\le\frac1h\Big(\sum_{y\in V\setminus \Lambda_k}J_{0,y}\Big),$$which can be made arbitrarily small (uniformly in $n$) by setting $k$ large enough. Now, a second use of the GHS inequality \cite{GriHurShe70} implies that 
\begin{align*}\sum_{x\in \Lambda_n\setminus\Lambda_{n-k}}\langle\sigma_0\sigma_x\rangle_{\Lambda_n,\beta,h}-\langle\sigma_0\rangle_{\Lambda_n,\beta,h}\langle\sigma_x\rangle_{\Lambda_n,\beta,h}&\le\frac{\langle\sigma_0\rangle_{\Lambda_{n},\beta,h}-\langle\sigma_0\rangle_{\Lambda_{n},\beta,{\bf h}}}h\\
&\le \frac{\langle\sigma_0\rangle_{\Lambda_{n},\beta,h}-\langle\sigma_0\rangle_{\Lambda_{n-k},\beta,h}}h,\end{align*}
where $\langle\cdot\rangle_{\Lambda_n\beta,{\bf h}}$ is the measure with inverse-temperature $\beta$, and magnetic field $h_x$ depending on $x$ which is equal to $h$ for $x\in\Lambda_{n-k}$ and $0$ in $\Lambda_n\setminus\Lambda_{n-k}$. In the second line, we used Griffiths inequality to show that $\langle\sigma_0\rangle_{\Lambda_{n-k},\beta,h}\le \langle\sigma_0\rangle_{\Lambda_{n},\beta,{\bf h}}$.
 For each fixed $k$, the term on the right converges to 0 as $n$ tends to infinity by weak convergence.
\end{remark}
\bigbreak   
   In order to prove
  Lemma~\ref{lem:ising:meanField}, we use a computation similar to one
  provided in~\cite{AizBarFer87}.

\begin{proof}[Proof of Lemma~\ref{lem:ising:meanField}] Let $\beta> 0$, $h>0$ and a finite subset $\Lambda$ of  $V$. 
  Set 
$$Z:=\sum_{\partial\n=\emptyset}w(\n).$$ 
The
derivative of $\langle \sigma_0\rangle_{\Lambda,\beta,h}$ is given by
the following formula
\begin{equation*}
  \frac{d}{d\beta}\langle
  \sigma_0\rangle_{\Lambda,\beta,h}=\sum_{\{x,y\}\subset\Lambda}J_{x,y}\big(\langle
  \sigma_0\sigma_x\sigma_y \rangle_{\Lambda,\beta,h} -\langle
  \sigma_0 \rangle_{\Lambda,\beta,h} \langle
  \sigma_x\sigma_y \rangle_{\Lambda,\beta,h}\big).
\end{equation*}
 Using \eqref{eq:15} and the
switching lemma, we obtain
\begin{equation*}
   \frac{d}{d\beta}\langle
  \sigma_0\rangle_{\Lambda,\beta,h}=\frac1{Z^2}
  \sum_{\{x,y\}\subset\Lambda}\ J_{x,y}\sum_{\substack{\partial
      \n_1=\{0,g\}\Delta\{x,y\}\\ \partial\n_2=\emptyset}}w(\n_1)w(\n_2)\mathbf
  I[0\nlr[\n_1+\n_2]g].
\end{equation*}
If $\n_1$ and $\n_2$ are two currents such that $\partial
      \n_1=\{0,g\}\Delta\{x,y\}$, $\partial\n_2=\emptyset$ and
      $0$ and $g$ are not connected in $\n_1+\n_2$, then exactly one of these two cases
      holds: $0\lr[\n_1+\n_2]x$ and $y\lr[\n_1+\n_2]g$, or $0\lr[\n_1+\n_2]y$ and
        $x\lr[\n_1+\n_2]g$.
Since the second case is the same as the first one with $x$ and
$y$ permuted, we obtain the following expression, 
\begin{equation}
  \label{eq:20}
   \frac{d}{d\beta}\langle
  \sigma_0\rangle_{\Lambda,\beta,h}=\frac1{Z^2}
  \sum_{\substack{x\in\Lambda\\ y\in\Lambda}}J_{x,y}\delta_{x,y},  
\end{equation}
where 
$$\delta_{x,y}=\displaystyle\sum_{\substack{\partial
      \n_1=\{0,g\}\Delta\{x,y\}\\ \partial\n_2=\emptyset}}w(\n_1)w(\n_2)\mathbf
  I[0\lr[\n_1+\n_2]x, y\lr[\n_1+\n_2]g, 0\nlr[\n_1+\n_2]g]$$
  (see Fig.~\ref{fig:piv} and notice the analogy with the event involved in Russo's formula, namely that the edge $\{x,y\}$ is pivotal, in Bernoulli percolation).
  \bigbreak

  \begin{figure}[htbp]
    \centering
    \includegraphics[width=5cm]{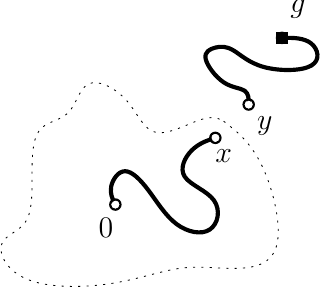}
    \caption{A diagrammatic representation of $\delta_{x,y}$: the solid
      lines represent the backbones, and the dotted line the boundary
      of the cluster of $0$ in $\n_1+\n_2$.}
    \label{fig:piv}
  \end{figure}
Given two currents $\n_1$ and $\n_2$, and $z\in \{0,g\}$, 
define $\calS_z$ to be the set of vertices in $\Lambda
\cup \{g\}$ that are \emph{not} connected to $z$ in $\n_1+\n_2$.
Let us compute $\delta_{x,y}$ by summing over the different possible
values for $\calS_0$:
\begin{align}
  \delta_{x,y}&=\sum_{\substack{S\subset \Lambda\cup \{g\}}}\ \sum_{\substack{\partial
      \n_1=\{0,g\}\Delta\{x,y\}\\ \partial\n_2=\emptyset}}w(\n_1)w(\n_2)\mathbf
  I[\calS_0=S, 0\lr[\n_1+\n_2]x, y\lr[\n_1+\n_2]g, 0\nlr[\n_1+\n_2]g]\\
  &=\sum_{\substack{S\subset \Lambda\cup \{g\}\\ \text{s.t.~}y,g \in
      S\\
      \text{and }0,x\in \Lambda\setminus S}}\ \sum_{\substack{\partial
      \n_1=\{0,g\}\Delta\{x,y\}\\ \partial\n_2=\emptyset}}w(\n_1)w(\n_2)\mathbf
  I[\calS_0=S, y\lr[\n_1+\n_2]g].
\end{align}
Since $0$ and $x$ are not connected to $y$ in $\n_1+\n_2$ (recall that $y\in S$), we deduce that $y$ must be connected to $g$ in $\n_1$ because of the constraints on sources. Thus, the indicator $\mathbf I[y\lr[\n_1+\n_2]g]$ equals 1 for any currents $\n_1$ and $\n_2$ satisfying $\calS_0=S$. Therefore,
\begin{align}
  \delta_{x,y}
  &=\sum_{\substack{S\subset \Lambda\cup \{g\}\\ \text{s.t.~}y,g \in
      S\\
      \text{and }0,x\in \Lambda\setminus S}}\ \sum_{\substack{\partial
      \n_1=\{0,g\}\Delta\{x,y\}\\ \partial\n_2=\emptyset}}w(\n_1)w(\n_2)\mathbf
  I[\calS_0=S].\label{eq:polo}
\end{align}
  Let us now focus on the following claim, which enables us to remove the sources $y$ and $g$.
\bigbreak
\noindent{\em Claim 1: Let $S\subset \Lambda$ containing $y$ and $g$ but neither $x$ nor $0$. We have
\begin{align}\label{eq:claim}\sum_{\substack{\partial
      \n_1=\{0,g\}\Delta\{x,y\}\\ \partial\n_2=\emptyset}}&w(\n_1)w(\n_2)\mathbf
  I[\calS_0=S]
 \\
 &\ge\frac1{\langle \sigma_y \rangle_{\Lambda,\beta,h}}\,\sum_{\substack{\partial
      \n_1=\{0\}\Delta\{x\}\\ \partial\n_2=\emptyset}}w(\n_1)w(\n_2)\mathbf
  I[\calS_0=S, y\lr[\n_1+\n_2]g].\end{align}
}
\bigbreak
\begin{proof}[Proof of Claim 1]
Let 
$$\Theta=\sum_{\substack{\partial
      \n_1=\{0,g\}\Delta\{x,y\}\\ \partial\n_2=\emptyset}}w(\n_1)w(\n_2)\mathbf
  I[\calS_0=S]
.$$
When $\calS_0=S$, the two currents $\n_1$ and $\n_2$ 
vanish on every $\{u,v\}$ with $u\in S$ and $v\notin S$. Thus, for $i=1,2$, we can decompose
$\n_i$ as \[\n_i=\n_i^{S}+\n_i^{\Lambda\setminus S},\] where $\n_i^A$
denotes the current 
$$\n_i^A(\{u,v\})=\begin{cases}\n_i(\{u,v\})&\text{ if }u,v\in A,\\
0&\text{ otherwise}.\end{cases}$$
Note that $\partial\n_i^A=A\cap\partial\n_i$ and $w(\n_i)=w(\n_i^{\Lambda\setminus S})w(\n_i^{S})$.

Since ${\bf I}[\calS_0=S]$ does not depend on $\n_1^{S}$, the decomposition $\n_1=\n_1^{S}+\n_1^{\Lambda\setminus S}$  gives
\begin{align*}\Theta
  &=\sum_{\substack{\partial
      \n_1^{\Lambda\setminus S}=\{0\}\Delta\{x\}\\ \partial\n_2=\emptyset}}w(\n_1^{\Lambda\setminus S})w(\n_2)\mathbf
  I[\calS_0=S]\Big(\sum_{\partial\n_1^{S}=\{y,g\}} w(\n_1^{S})\Big).\end{align*}
Using \eqref{eq:15}, we find \begin{align*}  \Theta&=\sum_{\substack{\partial
      \n_1^{\Lambda\setminus S}=\{0\}\Delta\{x\}\\ \partial\n_2=\emptyset}}w(\n_1^{\Lambda\setminus S})w(\n_2)\mathbf
  I[\calS_0=S]\langle\sigma_y\rangle_{S,\beta,h}\Big(\sum_{\partial\n_1^{S}=\emptyset} w(\n_1^{S})\Big).
 \end{align*}
 Multiply the expression above by $\langle \sigma_y \rangle_{\Lambda,\beta,h}\ge
 \langle\sigma_y\rangle_{S,\beta,h}$ (which follows from \eqref{eq:14}), and then
 decompose $\n_2$ into $\n_2^{S}$ and $\n_2^{\Lambda\setminus S}$ to
 find
\begin{align*}
   \langle \sigma_y \rangle_{\Lambda,\beta,h} \Theta &\ge \sum_{\substack{\partial
      \n_1^{\Lambda\setminus S}=\{0\}\Delta\{x\}\\ \partial\n_2=\emptyset}}w(\n_1^{\Lambda\setminus S})w(\n_2)\mathbf
  I[\calS_0=S]\langle\sigma_y\rangle_{S,\beta,h}^2 \Big(\sum_{\partial\n_1^{S}=\emptyset} w(\n_1^{S})\Big)\\
  &=\sum_{\substack{\partial\n_1^{\Lambda\setminus S}=\{0\}\Delta\{x\}\\
  \partial\n_2^{\Lambda\setminus S}=\emptyset}}w(\n_1^{\Lambda\setminus S})w(\n_2^{\Lambda\setminus S})\mathbf
  I[\calS_0=S]\langle\sigma_y\rangle_{S,\beta,h}^2\\
  &\quad\quad\quad\quad\quad\quad\quad\quad\quad\quad\quad\quad\quad\quad\quad\Big(\sum_{\substack{\partial\n_1^{S}=\emptyset\\
  \partial\n_2^{S}=\emptyset}}w(\n_1^{S})w(\n_2^{S})\Big)\\
    &=\sum_{\substack{\partial\n_1^{\Lambda\setminus S}=\{0\}\Delta\{x\}\\
  \partial\n_2^{\Lambda\setminus S}=\emptyset}}w(\n_1^{\Lambda\setminus S})w(\n_2^{\Lambda\setminus S})\mathbf
  I[\calS_0=S]\\
  &\quad\quad\quad\quad\quad\quad\quad\quad\quad\quad\quad\quad\quad\Big(\sum_{\substack{\partial\n_1^{S}=\{y,g\}\\
  \partial\n_2^{S}=\{y,g\}}}w(\n_1^{S})w(\n_2^{S})\Big)\\
&= \sum_{\substack{\partial
      \n_1=\{0\}\Delta\{x\}\Delta\{y,g\}\\ \partial\n_2=\{y,g\}}}w(\n_1)w(\n_2)\mathbf
  I[\calS_0=S].\end{align*}

  The switching lemma \eqref{eq:16} applied to $F={\bf I}[\calS_0=S]$ implies 
  \begin{align*}
 \langle \sigma_y \rangle_{\Lambda,\beta,h} \,\Theta&\ge\sum_{\substack{\partial
      \n_1=\{0\}\Delta\{x\}\\ \partial\n_2=\emptyset}}w(\n_1)w(\n_2)\mathbf
  I[\calS_0=S, y\lr[\n_1+\n_2]g].\end{align*}
  \end{proof}
Inserting \eqref{eq:claim} into \eqref{eq:polo} gives us
  \begin{align*}
\delta_{x,y}&\ge\frac1{\langle \sigma_y \rangle_{\Lambda,\beta,h}}\,\sum_{\substack{\partial
      \n_1=\{0\}\Delta\{x\}\\ \partial\n_2=\emptyset}}w(\n_1)w(\n_2)\mathbf
  I[y\lr[\n_1+\n_2]g,0\nlr[\n_1+\n_2]g].
\end{align*} 
We now decompose over the possible values of $\calS_g$ (recall that $\calS_g$ is
the set of vertices {\em not} connected to $g$):
\begin{align}
  \delta_{x,y}&\ge\frac1{\langle \sigma_y \rangle_{\Lambda,\beta,h}}\,
  \sum_{S\subset\Lambda}\ \sum_{\substack{\partial
      \n_1=\{0\}\Delta\{x\}\\ \partial\n_2=\emptyset}}w(\n_1)w(\n_2)\mathbf
  I[\calS_g=S,y\lr[\n_1+\n_2]g, 0\nlr[\n_1+\n_2]g]\nonumber\\
  &=\frac1{\langle \sigma_y \rangle_{\Lambda,\beta,h}}\, \sum_{\substack{S\subset\Lambda\\
      \text{s.t.~}0,x\in S\\\text{and }y\in\Lambda\setminus S}}\ \sum_{\substack{\partial
      \n_1=\{0\}\Delta\{x\}\\ \partial\n_2=\emptyset}}w(\n_1)w(\n_2)\mathbf
  I[\calS_g=S].\label{eq:papa}\end{align} In the second line, we used the
constraint on the sources, which implies that $x$ is connected to $0$, and
therefore, belong to $\mathcal S_g$.

We now focus on a second claim, which enables us to remove the sources $0$ and $x$.
 \bigbreak
\noindent{\em Claim 2: Let $S\subset \Lambda$ containing $0$ and $x$ but neither
  $y$ nor $g$. We have
\begin{align*}\label{eq:claim2}\sum_{\substack{\partial
      \n_1=\{0\}\Delta\{x\}\\ \partial\n_2=\emptyset}}&w(\n_1)w(\n_2)\mathbf
  I[\calS_g=S]
 &=\sum_{\substack{\partial
      \n_1=\emptyset\\ \partial\n_2=\emptyset}}w(\n_1)w(\n_2)\langle\sigma_0\sigma_x\rangle_{S,0}\,\mathbf
  I[\calS_g=S].\end{align*}}
  
  \begin{proof}[Proof of Claim 2]
For currents $\n_1$ and $\n_2$ such that $\calS_g=S$, $\n_1$ can be decomposed
as $\n_1=\n_1^S+\n_1^{\Lambda\cup\{g\}\setminus S}$ as we did for $\calS_0=S$ in the previous claim. Using that $w(\n_1)=w(\n_1^S)w(\n_1^{\Lambda\cup\{g\}\setminus S})$ together with the fact that ${\bf I}[\calS_g=S]$ does not depend on $\n_1^S$, we find that
\begin{align*}
\sum_{\substack{\partial
      \n_1=\{0\}\Delta\{x\}\\ \partial\n_2=\emptyset}}&w(\n_1)w(\n_2)\mathbf
  I[\calS_g=S]\\
 &=\sum_{\substack{\partial
      \n_1^{\Lambda\cup\{g\}\setminus S}=\emptyset\\ 
      \partial\n_2=\emptyset}}w(\n_1^{\Lambda\cup\{g\}\setminus S})w(\n_2)\mathbf
  I[\calS_g=S]\Big(\sum_{\partial \n_1^S=\{0\}\Delta\{x\}}w(\n_1^S)\Big)\\
   &=\sum_{\substack{\partial
      \n_1^{\Lambda\cup\{g\}\setminus S}=\emptyset\\ 
      \partial\n_2=\emptyset}}w(\n_1^{\Lambda\cup\{g\}\setminus S})w(\n_2)\mathbf
  I[\calS_g=S]\Big(\sum_{\partial \n_1^S=\emptyset}w(\n_1^S)\Big)\langle\sigma_0\sigma_x\rangle_{S,\beta,0}\\
  &=\sum_{\substack{\partial
      \n_1=\emptyset\\ \partial\n_2=\emptyset}}w(\n_1)w(\n_2)\langle\sigma_0\sigma_x\rangle_{S,\beta,0}\,\mathbf
  I[\calS_g=S] .\end{align*} In the third line we used \eqref{eq:15} and in the
fourth line, we recombined $\n_1^S$ with $\n_1^{\Lambda\cup\{g\}\setminus S}$.
\end{proof}
Inequality~\eqref{eq:papa} and Claim 2 imply that for any $x,y\in\Lambda$,
  \begin{align*}
 \delta_{x,y}  &\ge\frac1{\langle \sigma_y \rangle_{\Lambda,\beta,h}}\, \sum_{\substack{S\subset\Lambda\\
    \text{s.t.~}0,x\in S\\ \text{and }y\in\Lambda\setminus S}}\ \sum_{\substack{
      \partial \n_1=\emptyset\\ \partial\n_2=\emptyset}}w(\n_1)w(\n_2)\langle\sigma_0\sigma_x\rangle_{S,\beta,0}\,\mathbf  I[\calS_g=S].
\end{align*}
By plugging
the inequality above in \eqref{eq:20}, we find
\begin{align}
   & \frac{d}{d\beta}\langle
  \sigma_0\rangle_{\Lambda,\beta,h}^2=2\langle \sigma_0\rangle_{\Lambda,\beta,h}\frac{d}{d\beta}\langle \sigma_0\rangle_{\Lambda,\beta,h}\\
  &\ge\frac2{Z^2}
 \sum_{\substack{S\subset\Lambda\\ S\ni 0}} \sum_{\substack{x\in S\\ y\in \Lambda\setminus S}}\frac{\langle \sigma_0\rangle_{\Lambda,\beta,h}}{\langle \sigma_y\rangle_{\Lambda,\beta,h}} \sum_{\substack{
      \partial \n_1=\emptyset\\ \partial\n_2=\emptyset}}w(\n_1)w(\n_2)J_{x,y}\langle\sigma_0\sigma_x\rangle_{S,\beta,0}\mathbf  I[\calS_g=S]\\
      &\ge\frac2{Z^2}
 \sum_{\substack{S\subset\Lambda\\ S\ni 0}}  \Big(\sum_{\substack{x\in S\\ y\in \Lambda\setminus S}}\frac{\langle \sigma_0\rangle_{\Lambda,\beta,h}}{\langle \sigma_y\rangle_{\Lambda,\beta,h}}J_{x,y}\langle\sigma_0\sigma_x\rangle_{S,\beta,0}\Big)\Big(\sum_{\substack{
      \partial \n_1=\emptyset\\ \partial\n_2=\emptyset}}w(\n_1)w(\n_2)\mathbf  I[\calS_g=S]\Big)\\
       &\ge\frac{2c(\Lambda,\beta,h)}{Z^2}
 \sum_{\substack{S\subset\Lambda\\ S\ni 0}}  \Big(\sum_{\substack{x\in S\\ y\in \Lambda\setminus S}}J_{x,y}\langle\sigma_0\sigma_x\rangle_{S,\beta,0}\Big)\Big(\sum_{\substack{
      \partial \n_1=\emptyset\\ \partial\n_2=\emptyset}}w(\n_1)w(\n_2)\mathbf  I[\calS_g=S]\Big) .\end{align}
Using that $J_{x,y}\ge \frac1{\beta}\tanh(\beta J_{x,y})$ gives that 
 $$\sum_{\substack{x\in S\\ y\in \Lambda\setminus S}}J_{x,y}\langle\sigma_0\sigma_x\rangle_{S,\beta,0}\ge\tfrac1\beta\varphi_\beta(S)-\sum_{\substack{x\in S\\ y\in V\setminus \Lambda}}J_{x,y}\langle\sigma_0\sigma_x\rangle_{S,\beta,0}.$$
We deduce that
 \begin{align}\nonumber
 \frac{d}{d\beta}\langle
   \sigma_0&\rangle_{\Lambda,\beta,h}^2\ge2c(\Lambda,\beta,h)\Big(\frac{1}{\beta}\cdot   \underbrace{\sum_{\substack{S\subset\Lambda \\ S\ni 0}}\varphi_\beta(S)\cdot \frac1{Z^2}\sum_{\substack{
      \partial \n_1=\emptyset\\ \partial\n_2=\emptyset}}w(\n_1)w(\n_2)\mathbf  I[\calS_g=S]}_{(A)}\\
   &-\underbrace{\sum_{\substack{S\subset\Lambda \\ S\ni 0}}\sum_{\substack{x\in S\\ y\in V\setminus \Lambda}}J_{x,y}\langle\sigma_0\sigma_x\rangle_{S,\beta,0}\frac1{Z^2}\sum_{\substack{
      \partial \n_1=\emptyset\\ \partial\n_2=\emptyset}}w(\n_1)w(\n_2)\mathbf  I[\calS_g=S]}_{(B)}\Big)\label{eq:21}\end{align}
      Taking the infimum over all the $\varphi_\beta(S)$ and then using \eqref{eq:16} and \eqref{eq:15} one more time, we obtain that
  \begin{align}   (A) &~\ge~\inf_{S\ni 0}\varphi_\beta(S)\,(1-\langle
  \sigma_0\rangle_{\Lambda,\beta,h}^2).\nonumber
\end{align}
Now, summing on $S$ after applying Claim 2 (backward compared to the last use of Claim 2) gives that
\begin{align}
(B)&=\sum_{\substack{x\in \Lambda\\ y\in V\setminus \Lambda}}J_{x,y}\frac1{Z^2}\sum_{\substack{
      \partial \n_1=\{0\}\Delta\{x\}\\ \partial\n_2=\emptyset}}w(\n_1)w(\n_2)(1-\mathbf  I[0\lr[\n_1+\n_2]g])\label{eq:0}\\
      &=\sum_{\substack{x\in \Lambda\\ y\in V\setminus\Lambda}}J_{x,y}(\langle\sigma_0\sigma_x\rangle_{\Lambda,\beta,h}-\langle\sigma_0\rangle_{\Lambda,\beta,h}\langle\sigma_x\rangle_{\Lambda,\beta,h}),
\end{align}
where, in the second line, we used \eqref{eq:16} and \eqref{eq:15} one last time. Plugging the expressions for (A) and (B) obtained above in \eqref{eq:21} implies the claim.
\end{proof}

\subsection{Proof of Items~\ref{item:5} and \ref{item:6}}
\label{sec:proof-items-5-6}

In this section, we show that Items~\ref{item:5} and \ref{item:6} in
Theorem~\ref{thm:Ising} hold with $\tilde{\beta_c}$ in place of
$\beta_c$.

We need a replacement for the BK inequality used in the case of Bernoulli
percolation. The relevant tool for the Ising model will be a modified version of
Simon's inequality. The original inequality can be found in \cite{Sim80}, see
also \cite{Lie80} for an improvement. (Those previous versions do not suffice
for our application).
\begin{lemma}[Modified Simon's inequality]\label{lem:finiteCriterion:ising}
  Let $S$ be a finite subset of $V$ containing 0. For every $z\in V\setminus S$,
$$\langle \sigma_0\sigma_z\rangle_{\beta}^+\le \sum_{x\in S}\sum_{y\notin S}\tanh(\beta J_{x,y})\langle \sigma_0\sigma_x\rangle_{S,\beta,0}\langle \sigma_y\sigma_z\rangle_{\beta}^+.$$
\end{lemma}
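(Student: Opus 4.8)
The plan is to prove a finite-volume form of the inequality via the backbone representation of Section~\ref{sec:rand-curr-repr}, and then pass to the infinite-volume and $h\searrow 0$ limits. Fix $\beta>0$, $h>0$, a finite $S\subset V$ with $0\in S$, and a finite $\Lambda\supset S$ containing $z$, and work in the random-current model on $\Lambda$ with its ghost vertex $g$. By {\bf P1}, $\langle\sigma_0\sigma_z\rangle_{\Lambda,\beta,h}=\sum_{\partial\omega=\{0,z\}}\rho_\Lambda(\omega)$, the sum being over backbones $\omega$ from $0$ to $z$. Since $0\in S$ and $z\notin S$, each such $\omega=(v_0,\dots,v_K)$ has a well-defined \emph{first exit edge}: with $m:=\min\{k:v_k\notin S\}$, put $x:=v_{m-1}\in S$ and $y:=v_m\in(\Lambda\cup\{g\})\setminus S$, so that $\omega=\omega_1\circ\{x,y\}\circ\omega_2$, where $\omega_1$ is a backbone from $0$ to $x$ contained in $S$ and $\omega_2$ is a backbone from $y$ to $z$.

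Two applications of {\bf P2} give $\rho_\Lambda(\omega)=\rho_\Lambda(\omega_1)\,\rho_{\Lambda\setminus\overline{\omega}_1}(\{x,y\})\,\rho_{\Lambda'}(\omega_2)$ with $\Lambda':=(\Lambda\setminus\overline{\omega}_1)\setminus\overline{\{x,y\}}\subset\Lambda$, and I bound the three factors separately. For the first, $\omega_1$ uses no edge outside $S$ (in particular none incident to $g$), so {\bf P3} yields $\rho_\Lambda(\omega_1)\le\rho_S(\omega_1)$, the backbone weight of the $h=0$ model on $S$; summing over $\omega_1$ and using {\bf P1} again produces exactly $\langle\sigma_0\sigma_x\rangle_{S,\beta,0}$. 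For the middle factor I use the Lieb-type estimate $\rho_T(\{x,y\})\le\tanh(\beta J_{x,y})$, valid for any finite $T$ containing the edge $\{x,y\}$: bounding $\mathbf I[\omega(\n)=\{x,y\}]\le\mathbf I[\n_{x,y}\ge1]$, decomposing a current $\n$ at the edge $\{x,y\}$ according to the parity of $\n_{x,y}$, and summing (with $t:=\beta J_{x,y}$) $\sum_{k\ \mathrm{odd}}\tfrac{t^k}{k!}=\sinh t$ and $\sum_{k\ \mathrm{even},\,k\ge2}\tfrac{t^k}{k!}=\cosh t-1$, one is left with $\cosh t\,(\cosh t-1)\le\sinh^2 t$, i.e.\ $\cosh t\le\cosh t+1$. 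For the third factor, $\sum_{\omega_2}\rho_{\Lambda'}(\omega_2)=\langle\sigma_y\sigma_z\rangle_{\Lambda',\beta,h}\le\langle\sigma_y\sigma_z\rangle_{\Lambda,\beta,h}$ by Griffiths' inequality~\eqref{eq:14}, since $\Lambda'\subset\Lambda$.

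Summing over all backbones $\omega$ and then enlarging the sum to run over $\omega_1$ and $\omega_2$ independently (legitimate, as every term is nonnegative and $\Lambda'\subset\Lambda$ uniformly in $\omega_1$) yields
\[
\langle\sigma_0\sigma_z\rangle_{\Lambda,\beta,h}\ \le\ \sum_{x\in S}\sum_{y\in(\Lambda\cup\{g\})\setminus S}\tanh(\beta J_{x,y})\,\langle\sigma_0\sigma_x\rangle_{S,\beta,0}\,\langle\sigma_y\sigma_z\rangle_{\Lambda,\beta,h}.
\]
The term with $y=g$ is at most $\tanh(h)\sum_{x\in S}\langle\sigma_0\sigma_x\rangle_{S,\beta,0}\le\tanh(h)\,|S|$. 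Letting $\Lambda\nearrow V$ (Griffiths' inequality~\eqref{eq:14} gives monotonicity, so monotone convergence applies to the right-hand side) and then $h\searrow0$, using the definition of $\langle\cdot\rangle_\beta^+$, the $y=g$ contribution vanishes and the $h$-limit passes through the sum by dominated convergence (the case of an infinite right-hand side being trivial), which gives the asserted inequality.

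The main obstacle is the bookkeeping in the middle step: one must verify that after peeling off $\omega_1$ and the edge $\{x,y\}$ the residual backbone $\omega_2$ still lives in a subgraph of $\Lambda$, so that~\eqref{eq:14} can be invoked, and that {\bf P3} simultaneously confines $\omega_1$ to $S$ \emph{and} removes the ghost, so that the quantity that appears is the field-free correlation $\langle\sigma_0\sigma_x\rangle_{S,\beta,0}$ and not $\langle\sigma_0\sigma_x\rangle_{S,\beta,h}$. It is precisely the Lieb refinement $\rho_T(\{x,y\})\le\tanh(\beta J_{x,y})$ — rather than the cruder $\rho_T(\{x,y\})\le\beta J_{x,y}$ that would follow from $t\ge1-e^{-t}$ — that makes $\tilde\beta_c$ coincide with $\beta_c$ instead of being merely a lower bound for it.
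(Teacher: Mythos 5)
Your proof is correct and reaches the same finite-volume inequality, but via a genuinely different decomposition of the backbone, so it is worth comparing the two routes. The paper lets $y$ be the \emph{first vertex of $\Lambda\setminus S$} on the backbone (so $y\neq g$ always) and $x$ the last $S$-vertex before $y$; then $\omega_1$ lives in $S\cup\{g\}$, the middle piece $\omega_2$ from $x$ to $y$ is a one- or two-step walk through $\{x,y,g\}$, and {\bf P3}+{\bf P1} reduce this middle factor to the full two-spin correlation $\langle\sigma_x\sigma_y\rangle_{\{x,y\},\beta,h}$, which only becomes $\tanh(\beta J_{x,y})$ after $h\searrow0$. You instead cut at the \emph{first exit edge}, so $\omega_1$ stays strictly inside $S$ (no ghost bond), the middle factor is a single edge $\{x,y\}$ with $y$ possibly equal to $g$, and you estimate $\rho_T(\{x,y\})\le\tanh(\beta J_{x,y})$ directly for an arbitrary ambient $T$ by the parity decomposition at the edge (your algebra $\cosh t(\cosh t-1)\le\sinh^2 t$ is correct). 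This buys you a cleaner, domain-independent Lieb estimate and a more transparent ``first-exit'' picture, at the price of a ghost-exit term ($y=g$) which you then dispose of with the $\tanh(h)\,|S|$ bound as $h\searrow0$.

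One point to tighten: you assert that because $\omega_1$ uses no ghost bond, {\bf P3} gives $\rho_\Lambda(\omega_1)\le\rho_S(\omega_1)$ with $\rho_S$ \emph{already} the weight of the $h=0$ model, hence $\langle\sigma_0\sigma_x\rangle_{S,\beta,0}$ in finite volume. As stated in the paper, {\bf P3} is a restriction $T\subset\Lambda$ in which the ghost is retained, and it yields only $\langle\sigma_0\sigma_x\rangle_{S,\beta,h}$; the removal of the magnetic field (equivalently of the ghost bonds) is a further bond-erasure monotonicity that {\bf P3} as quoted does not literally assert. This is harmless --- the paper itself keeps the $h$-dependence and obtains $\langle\sigma_0\sigma_x\rangle_{S,\beta,0}$ only in the final $h\searrow0$ limit (using that $S$ is finite), and you take that limit anyway --- but your intermediate finite-volume inequality, with $\langle\sigma_0\sigma_x\rangle_{S,\beta,0}$, is a stronger claim than {\bf P3} alone justifies. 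Either appeal explicitly to the general bond-deletion monotonicity of backbone weights, or simply carry $\langle\sigma_0\sigma_x\rangle_{S,\beta,h}$ through and let $h\searrow0$ at the end as the paper does.
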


\begin{proof}
Fix $h\ge0$ and $\Lambda$ a finite subset of $V$ containing $S$. We introduce the ghost vertex $g$ as before.

We consider the backbone representation of the Ising model on $\Lambda\cup\{g\}$ defined in the previous section. Let $\omega=(v_k)_{0\le k\le K}$ be a backbone from $0$ to $z$ (it may go through $g$). Since $z\notin S$, one can define the first $k$ such that $v_{k}\in \Lambda\setminus S$ and set $y=v_k$. Also set $x$ to be the vertex of $S$ visited last by the backbone before reaching $y$. The following occurs:
\begin{itemize}[noitemsep,nolistsep]\item $\omega$ goes from $0$ to $x$ staying in $S\cup\{g\}$,
\item then $\omega$ goes from $x$ to $y$ either in one step by using the edge $\{x,y\}$ or in two steps by going through $\{x,g\}$ and then $\{g,y\}$,
\item finally $\omega$ goes from $y$ to $z$ in $\Lambda\cup\{g\}$.
\end{itemize}
Call $\omega_1$ the part of the walk $\omega$ from $0$ to $x$, $\omega_2$ the walk from $x$ to $y$, and $\omega_3$ the reminder of the walk $\omega$. 

Using Property {\bf P1} of the backbone representation, we can write
\begin{align*}\langle \sigma_0\sigma_z\rangle_{\Lambda,\beta,h}&=\sum_{\partial\omega=\{0,z\}}\rho_{\Lambda}(\omega).\end{align*}
Then, {\bf P2} applied with $\omega_1$ and $\omega_2\circ\omega_3$ and then with $\omega_2$ and $\omega_3$ implies that $\langle \sigma_0\sigma_z\rangle_{\Lambda,\beta,h}$ is bounded from above by
\begin{align*}
&\sum_{x\in S}\sum_{y\in \Lambda\setminus S}\sum_{\partial\omega_1=\{0,x\}}\rho_\Lambda(\omega_1)\Big(\sum_{\partial\omega_2=\{x,y\}}\rho_{\Lambda\setminus\overline{\omega_1}}(\omega_2)\Big(\sum_{\partial\omega_3=\{y,z\}}\rho_{\Lambda\setminus\overline{\omega_1\circ\omega_2}}(\omega_3)\Big)\Big).\end{align*}
{\bf P1} and then Griffiths' inequality \eqref{eq:14} imply that 
$$\sum_{\partial\omega_3=\{y,z\}}\rho_{\Lambda\setminus\overline{\omega_1\circ\omega_2}}(\omega_3)=\langle \sigma_y\sigma_z\rangle_{\Lambda\setminus \overline{\omega_1\circ\omega_2},\beta,h}\le \langle \sigma_y\sigma_z\rangle_{\Lambda,\beta,h}.$$ Inserting this in the last displayed equation gives
 \begin{align*}
\langle \sigma_0\sigma_z\rangle_{\Lambda,\beta,h}&\le  \sum_{x\in S}\sum_{y\in  \Lambda\setminus S}\Big(\sum_{\partial\omega_1=\{0,x\}}\rho_\Lambda(\omega_1)\Big(\sum_{\partial\omega_2=\{x,y\}}\rho_{\Lambda\setminus\overline{\omega_1}}(\omega_2)\Big)\Big)\langle \sigma_y\sigma_z\rangle_{\Lambda,\beta,h}.\end{align*}
Since $\omega_2$ uses only vertices $x$, $y$ and $g$, {\bf P3} and then {\bf P1} lead to
$$\sum_{\partial\omega_2=\{x,y\}}\rho_{\Lambda\setminus\overline{\omega_1}}(\omega_2)\le \sum_{\partial\omega_2=\{x,y\}}\rho_{\{x,y\}}(\omega_2)=\langle \sigma_x\sigma_y\rangle_{\{x,y\},\beta,h}$$
which gives
\begin{align*}
\langle \sigma_0\sigma_z\rangle_{\Lambda,\beta,h}&\le  \sum_{x\in S}\sum_{y\in  \Lambda\setminus S}\Big(\sum_{\partial\omega_1=\{0,x\}}\rho_\Lambda(\omega_1)\Big)\langle \sigma_x\sigma_y\rangle_{\{x,y\},\beta,h}\langle \sigma_y\sigma_z\rangle_{\Lambda,\beta,h}.\end{align*}
Finally, {\bf P3} can be used with the fact that $\omega_1\subset S\cup\{g\}$ to show that
\begin{align*}
\langle \sigma_0\sigma_z\rangle_{\Lambda,\beta,h}&\le  \sum_{x\in S}\sum_{y\in  \Lambda\setminus S}\Big(\sum_{\partial\omega_1=\{0,x\}}\rho_S(\omega_1)\Big)\langle \sigma_x\sigma_y\rangle_{\{x,y\},\beta,h}\langle \sigma_y\sigma_z\rangle_{\Lambda,\beta,h}\\
&\le  \sum_{x\in S}\sum_{y\in  \Lambda\setminus S}\langle \sigma_0\sigma_x\rangle_{S,\beta,h}\,\langle \sigma_x\sigma_y\rangle_{\{x,y\},\beta,h}\,\langle \sigma_y\sigma_z\rangle_{\Lambda,\beta,h}
\end{align*}
(we used {\bf P1} in the second line). Let $\Lambda$ tend to $V$ to obtain 
\begin{align*}
\langle \sigma_0\sigma_z\rangle_{\beta,h}&\le   \sum_{x\in S}\sum_{y\in  V\setminus S}\langle \sigma_0\sigma_x\rangle_{S,\beta,h}\,\langle \sigma_x\sigma_y\rangle_{\{x,y\},\beta,h}\,\langle \sigma_y\sigma_z\rangle_{\beta,h}.
\end{align*}
Let now $h$ tend to 0 to find
\begin{itemize}[noitemsep]
\item $\langle \sigma_0\sigma_z\rangle_{\beta,h}$ and $\langle \sigma_y\sigma_z\rangle_{\beta,h}$ tend to $\langle \sigma_0\sigma_z\rangle_{\beta}^+$ and $\langle \sigma_y\sigma_z\rangle_{\beta}^+$ respectively.
\item $\langle \sigma_0\sigma_x\rangle_{S,\beta,h}$ tends to $\langle \sigma_0\sigma_x\rangle_{S,\beta,0}$ (since $S$ is finite).
\item  $\langle \sigma_x\sigma_y\rangle_{\{x,y\},\beta,h}$ tends to $\langle \sigma_x\sigma_y\rangle_{\{x,y\},\beta,0}=\tanh(\beta J_{x,y})$.
\end{itemize}
Using one last time that $S$ is finite, we deduce that 
$$\langle \sigma_0\sigma_z\rangle_{\beta}^+\le \sum_{x\in S}\sum_{y\in V\setminus S}\tanh(\beta J_{x,y})\langle \sigma_0\sigma_x\rangle_{S,\beta,0}\langle \sigma_y\sigma_z\rangle_{\beta}^+.$$
\end{proof}

We are now in a position to conclude the proof. 
Let $\beta<\tilde{\beta_c}$. Fix a finite set $S$ such that
$\varphi_\beta(S)<1$. Define, $$\chi_n(\beta):=\sup\big\{\sum_{z\in\Lambda}\langle \sigma_0 \sigma_z\rangle_{\beta}^+:\Lambda\subset V\text{ with }|\Lambda|\le n\big\}.$$
By the same reasoning as for percolation, 
Lemma~\ref{lem:finiteCriterion:ising} shows that
\begin{equation}
  \label{eq:22}
\sum_{z\in\Lambda}\langle \sigma_0 \sigma_z\rangle_\beta^+\le |S|+\sum_{x\in
  S}\sum_{y\in V\setminus S}\tanh(\beta J_{x,y})\langle
\sigma_0\sigma_x\rangle_{S,\beta,0}\Big(\sum_{z\in \Lambda\setminus S}\langle \sigma_y \sigma_z\rangle_{\beta}^+\Big).
\end{equation}
Using the invariance under translations and taking the supremum over sets $\Lambda$ of volume $n$, we immediately get that
$ \chi_n(\beta)<|S|/[1-\varphi_\beta(S)]$ uniformly in $n$. Letting $n$ tend to infinity $\infty$ gives the second item.
\bigbreak We finish by the proof of the third item. Let $R$ be
the range of the $(J_{x,y})_{x,y\in V}$, and let $L$ be such that $S\subset
\Lambda_{L-R}$. Lemma~\ref{lem:finiteCriterion:ising} implies that for any $z$ with $\mathrm{d}(0,z)\ge n>L$, 
\begin{align*}\langle \sigma_0\sigma_z\rangle_{\beta}^+&\le \sum_{x\in S}\sum_{y\in V\setminus S}\tanh(\beta J_{x,y})\langle \sigma_0\sigma_x\rangle_{S,\beta,0}\langle \sigma_y\sigma_z\rangle_{\beta}^+\le \varphi_\beta(S)\,\max_{y\in\Lambda_L}\langle \sigma_y\sigma_z\rangle_{\beta}^+.\end{align*}
Note that $d(y,z)\ge n-L$. If $d(y,z)\le L$, we bound $\langle \sigma_y\sigma_z\rangle_\beta^+$ by 1, while if $d(y,z)>L$, we apply the previous inequality to $y$ and $z$ instead of $0$ and $z$. The proof follows by iterating $\lfloor n/L\rfloor$ times this strategy.
\bibliographystyle{alpha}

\subsection{Proof of Proposition~\ref{prop:field2}}\label{sec:901}

Let us introduce $M(\beta,h)=\langle\sigma_0\rangle_{\beta,h}$.  Recall that
$M(\beta,h)$ is differentiable in $(\beta,h)$ away from the line $h=0$.

As in the case of percolation, the proof in \cite{AizBarFer87} invokes three
inequalities (the pages below refer to the numbering in~\cite{AizBarFer87}): the differential inequality (1.12) page 348,
\begin{equation}\label{eq:his}
  \frac{\partial M}{\partial\beta}\le \big(\sum_{y\in V}J_{0,y}\big)\, M\,\frac{\partial M}{\partial
    h},
\end{equation}
the more difficult differential inequality (1.9) page 347, as well as (1.13) page 348.
Below, we combine Lemma~\ref{lem:ising:meanField} with \eqref{eq:his} to conclude the proof without using (1.9) or (1.13) of \cite{AizBarFer87}.

Since $M(\beta,h)$ is differentiable for $h>0$, we may pass to the limit $\Lambda\nearrow V$ in Lemma~\ref{lem:ising:meanField} to get
$$M\,\frac{\partial M}{\partial \beta} \ge \frac{2}{\beta}\,(1-M^2)$$
for $h>0$ and $\beta\ge \beta_c$ (once again we used that $\varphi_\beta(S)\ge1$ for any finite $S\ni0$ and for any $\beta\ge\beta_c$, see the comment before Proposition~\ref{prop:b}).
Together with \eqref{eq:his}, we find
\begin{equation*}
\frac{2}{\beta} \,(1-M^2) \le M\,\frac{\partial M}{\partial\beta}\le \big(\sum_{y\in V}J_{0,y}\big)\, M^2\,\frac{\partial M}{\partial
    h}
\end{equation*}
which immediately implies that there exists a
constant $c>0$ such that for any $h>0$, 
$$\langle\sigma_0\rangle_{\beta_c,h}=M(\beta_c,h)~\ge~ ch^{1/3}.$$

To conclude this article, let us recall the proof of \eqref{eq:his} for completeness.
\begin{lemma}[(1.12) page 348 of \cite{AizBarFer87}] On $(0,1)\times(0,\infty)$,
  the function $M$ satisfies the following differential inequality:
\begin{equation*}
  \frac{\partial M}{\partial\beta}\le \big(\sum_{y\in V}J_{0,y}\big)\, M\,\frac{\partial M}{\partial
    h}.
\end{equation*}
\end{lemma}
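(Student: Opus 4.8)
The plan is to imitate the percolation argument behind Proposition~\ref{prop:magnetic field}, but inside the random-current representation, recycling almost verbatim the computation already carried out in the proof of Lemma~\ref{lem:ising:meanField}. Fix $\beta\in(0,1)$, $h>0$ and a finite $\Lambda\ni 0$, and introduce the ghost vertex $g$ with $J_{x,g}=h/\beta$ as there; write $Z=\sum_{\partial\n=\emptyset}w(\n)$. By the switching-lemma computation leading to \eqref{eq:20} we already have
$$\frac{d}{d\beta}\langle\sigma_0\rangle_{\Lambda,\beta,h}=\frac1{Z^2}\sum_{x\in\Lambda}\sum_{y\in\Lambda}J_{x,y}\,\delta_{x,y},$$
with $\delta_{x,y}$ as defined in that proof, and by \eqref{eq:polo},
$$\delta_{x,y}=\sum_{\substack{S\subset\Lambda\cup\{g\}\\ y,g\in S,\ 0,x\notin S}}\ \sum_{\substack{\partial\n_1=\{0,g\}\Delta\{x,y\}\\ \partial\n_2=\emptyset}}w(\n_1)w(\n_2)\,\mathbf I[\calS_0=S],$$
where $\calS_0$ is the set of vertices not connected to $0$ in $\n_1+\n_2$. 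The only difference with Lemma~\ref{lem:ising:meanField} is that here I want an \emph{upper} bound on $\delta_{x,y}$, and I must \emph{extract} the factor $\langle\sigma_y\rangle$ (which tends to $M$) rather than divide by it.

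First I would fix such an $S$ and split each current into its restriction to $S$ and to $S^c:=(\Lambda\cup\{g\})\setminus S$, exactly as in Claim~1 of the proof of Lemma~\ref{lem:ising:meanField}: on $\{\calS_0=S\}$ both currents vanish across the $S$--$S^c$ boundary, $\n_1$ has sources $\{y,g\}$ inside $S$ and $\{0,x\}$ inside $S^c$, and $\n_2$ is sourceless on each side. Summing the $S$-part gives, via \eqref{eq:15}, the factor $\bigl(\sum_{\partial\n^S=\emptyset}w(\n^S)\bigr)^2\langle\sigma_y\rangle_{S,\beta,h}$, which by Griffiths' inequality \eqref{eq:14} is at most $\bigl(\sum_{\partial\n^S=\emptyset}w(\n^S)\bigr)^2 M$. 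Recombining the now-sourceless $S$-currents with the $S^c$-part, and summing over all admissible $S$ (noting that when $\partial\n_1=\{0,x\}$ the vertices $0,x$ are automatically joined in $\n_1+\n_2$, so $\sum_S\mathbf I[\calS_0=S]$ collapses to $\mathbf I[0\nlr y,\,0\nlr g]\le\mathbf I[0\nlr g]$), I obtain
$$\delta_{x,y}\ \le\ M\sum_{\substack{\partial\n_1=\{0,x\}\\ \partial\n_2=\emptyset}}w(\n_1)w(\n_2)\,\mathbf I[0\nlr g].$$

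Next I would identify the right-hand side with a truncated two-point function: applying the switching lemma \eqref{eq:16} with $A=\{0,x\}$ and $\{u,v\}=\{0,g\}$ gives $\sum_{\partial\n_1=\{0,x\},\,\partial\n_2=\emptyset}w(\n_1)w(\n_2)\mathbf I[0\lr g]=Z^2\langle\sigma_0\rangle_{\Lambda,\beta,h}\langle\sigma_x\rangle_{\Lambda,\beta,h}$, hence the displayed sum equals $Z^2\bigl(\langle\sigma_0\sigma_x\rangle_{\Lambda,\beta,h}-\langle\sigma_0\rangle_{\Lambda,\beta,h}\langle\sigma_x\rangle_{\Lambda,\beta,h}\bigr)$. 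Plugging this back yields
$$\frac{d}{d\beta}\langle\sigma_0\rangle_{\Lambda,\beta,h}\ \le\ M\sum_{x\in\Lambda}\sum_{y\in\Lambda}J_{x,y}\bigl(\langle\sigma_0\sigma_x\rangle_{\Lambda,\beta,h}-\langle\sigma_0\rangle_{\Lambda,\beta,h}\langle\sigma_x\rangle_{\Lambda,\beta,h}\bigr).$$
Letting $\Lambda\nearrow V$, using transitivity to replace $\sum_{y}J_{x,y}$ by $\sum_y J_{0,y}$, and recognising $\sum_{x\in V}\bigl(\langle\sigma_0\sigma_x\rangle_{\beta,h}-M^2\bigr)=\partial M/\partial h$, one gets exactly $\partial M/\partial\beta\le\bigl(\sum_y J_{0,y}\bigr)M\,\partial M/\partial h$.

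The one genuinely delicate point is this passage to infinite volume: one must justify interchanging $\Lambda\nearrow V$ with $d/d\beta$ on the left and with the sum over $x,y$ on the right, which rests on the regularity of $M$ for $h>0$ (Lee--Yang/GHS: differentiability and summability of truncated correlations) and is precisely the technical step already invoked in the excerpt when passing to the limit in Lemma~\ref{lem:ising:meanField}. Everything else is the same current-splitting bookkeeping as in Claim~1, carried out with the inequality pointing the other way and with Griffiths used to push $\langle\sigma_y\rangle_{S,\beta,h}$ up to $M$ instead of pulling it down.
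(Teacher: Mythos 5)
Your proof is correct, but it takes a genuinely different route from the paper's. The paper simply invokes the Griffiths--Hurst--Sherman inequality as a black box: one line of GHS, a sum over pairs, and an identification of $\sum_x(\langle\sigma_0\sigma_x\rangle-M^2)$ with $\partial M/\partial h$. You instead re-derive the needed pointwise bound on $\delta_{x,y}$ by reversing the argument of Claim~1 of Lemma~\ref{lem:ising:meanField}: you split across $\calS_0=S$, extract $\langle\sigma_y\rangle_{S,\beta,h}$ via \eqref{eq:15}, push it up to $M$ with Griffiths' inequality \eqref{eq:14} (rather than pulling it down, as in the lower bound), recombine, drop the indicator $\mathbf I[0\nlr y]$, and recognise the truncated two-point function via the switching lemma with $A=\{0,x\}$, $\{u,v\}=\{0,g\}$. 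All the individual steps check out: $0$ and $x$ are automatically joined when $\partial\n_1=\{0,x\}$ by parity, the bound $\langle\sigma_y\rangle_{S,\beta,h}\le M$ is legitimate (Griffiths plus transitivity in the infinite-volume limit), and the two switching-lemma identities you use are exactly the standard ones. The infinite-volume passage you flag as delicate is the same one the paper makes when passing $\Lambda\nearrow V$ in Section~\ref{sec:901}.

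What each approach buys: the paper's version is shorter because it outsources the real content to GHS. Your version is longer but entirely self-contained within the random-current machinery already set up for Lemma~\ref{lem:ising:meanField}, and it makes the analogy with the percolation inequality \eqref{eq:bateau2} transparent, since what you bound is precisely the random-current analogue of ``$\{x,y\}$ is pivotal.'' In effect you are giving a random-current proof of the consequence of GHS that the lemma requires, which is coherent with the philosophy of the whole section; the price is that the computation is essentially a second pass through Claim~1 with the inequalities reversed, which the paper avoids by citing \cite{GriHurShe70}. One minor expository point: at finite volume the bound $\langle\sigma_y\rangle_{S,\beta,h}\le M$ should be phrased as $\langle\sigma_y\rangle_{S,\beta,h}\le\langle\sigma_y\rangle_{\beta,h}=M$ (Griffiths to infinite volume, then transitivity), rather than as a direct finite-volume comparison with $\langle\sigma_0\rangle_{\Lambda,\beta,h}$, since $\langle\sigma_y\rangle_{\Lambda,\beta,h}$ need not equal $\langle\sigma_0\rangle_{\Lambda,\beta,h}$ for a fixed $\Lambda$.
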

\begin{proof}
Let $\beta>0$, $h>0$. We have 
\begin{equation*}
  \frac{\partial M}{\partial\beta}=\sum_{\{x,y\}}J_{x,y}\big(\langle
  \sigma_0\sigma_x\sigma_y \rangle_{\beta,h} -\langle
  \sigma_0 \rangle_{\beta,h} \langle
  \sigma_x\sigma_y \rangle_{\beta,h}\big).
\end{equation*}
The Griffith-Hurst-Sherman inequality \cite[(2.8)]{GriHurShe70} gives\begin{align*}\langle
  \sigma_0\sigma_x\sigma_y \rangle_{\beta,h} -\langle
  \sigma_0 \rangle_{\beta,h} \langle
  \sigma_x\sigma_y \rangle_{\beta,h}&\le (\langle
  \sigma_0\sigma_x \rangle_{\beta,h} -\langle
  \sigma_0 \rangle_{\beta,h} \langle
  \sigma_x \rangle_{\beta,h})\langle
  \sigma_y \rangle_{\beta,h}\\
  &\ \ +(\langle
  \sigma_0\sigma_y \rangle_{\beta,h} -\langle
  \sigma_0 \rangle_{\beta,h} \langle
  \sigma_y \rangle_{\beta,h})\langle
  \sigma_x \rangle_{\beta,h}.\end{align*}
  This implies that
  $$\frac{\partial M}{\partial\beta}\le \Big(\sum_{y\in V}J_{0,y}\Big)\,M\,\Big(\sum_{x}\langle
  \sigma_0\sigma_x \rangle_{\beta,h} -\langle
  \sigma_0 \rangle_{\beta,h} \langle
  \sigma_x \rangle_{\beta,h}\Big)=\Big(\sum_{y\in V}J_{0,y}\Big)\,M\,\frac{\partial M}{\partial h}.$$
\end{proof}

\paragraph{Acknowledgments} This work was supported by a grant from the Swiss NSF and the NCCR SwissMap also funded by the Swiss NSF. We thank M. Aizenman and G. Grimmett for useful comments on this paper. We also thank D. Ioffe, A. Glazman and M. Lis for pointing out mistakes in previous versions of the manuscript. Finally, we thank the anonymous referees for numerous important comments and suggestions.
\bibliography{biblicomplete}
\small\begin{flushright}
\textsc{D\'epartement de Math\'ematiques}
  \textsc{Universit\'e de Gen\`eve}
  \textsc{Gen\`eve, Switzerland}
  \textsc{E-mail:} \texttt{hugo.duminil@unige.ch}, \texttt{vincent.tassion@unige.ch}
\end{flushright}
\end{document}